\newtheorem{prop}{Proposition}
\newtheorem{cor}[prop]{Corollary}
\newtheorem{lem}[prop]{Lemma}
\newtheorem{defn}[prop]{Definition}
\newtheorem{rem}[prop]{Remark}
\newcommand{\half}{\frac{1}{2}}
\newtheorem{example}{Example}
\newcommand{\N}{{\mathbb N}}
\renewcommand{\P}{{\mathcal P}}
\newcommand{\R}{{\mathbb R}}
\newcommand{\activeSet}{{\mathcal A}}
\newcommand{\inactiveSet}{{\mathcal I}}
\newcommand{\lb}[1]{\underline{#1}}
\newcommand{\bigG}{\underline{G}}
\newcommand{\bigw}{\underline{w}}
\newcommand{\bigE}{\underline{E}}
\newcommand{\bigB}{\underline{B}}
\newcommand{\bigA}{\underline{A}}
\newcommand{\bigx}{\underline{x}}
\newcommand{\bigu}{\underline{u}}
\newcommand{\bigH}{\underline{H}}
\newcommand{\bigF}{\underline{F}}
\newcommand{\bigY}{\underline{Y}}
\newcommand{\TODO}[1]{\textcolor{red}{\\ (TODO: #1)}} 
\newcommand{\REMARK}[1]{} 
\newcommand{\blue}[1]{\textcolor{blue}{ {#1} }}
\renewcommand{\blue}[1]{{#1}}
\newcommand{\red}[1]{\textcolor{red}{ {#1} }}
\renewcommand{\red}[1]{{#1}}
\newcommand{\green}[1]{\textcolor{violet}{ {#1} }}
\renewcommand{\green}[1]{{#1}}
\renewcommand{\a}{\mathcal{\alpha}}
\newcommand{\A}{\mathcal{A}}
\newcommand{\I}{\mathcal{I}}
\newcommand{\T}{\mathcal{T}}
\newcommand{\X}{\mathcal{X}}
\newcommand{\U}{\mathcal{U}}
\newcommand{\F}{\mathcal{F}}
\renewcommand{\i}{\iota}
\begin{document} 
	\title{On the structure of the set of active sets in constrained linear quadratic regulation}
	
	\author{Martin M\"onnigmann, Automatic Control and Systems Theory \\ 
	Department of Mechanical Engineering,
	Ruhr-Universit\"at Bochum, Germany \\
	{\tt martin.moennigmann@rub.de}}
	\maketitle
	\begin{abstract}
	The constrained linear quadratic regulation problem is solved by a continuous piecewise affine function on a set of state space polytopes. 
	It is an obvious question whether this solution can be built up iteratively by increasing the horizon, 
	i.e., by extending the classical backward dynamic programming solution for the unconstrained case to the constrained case.  
    Unfortunately, however, the piecewise affine solution for horizon $N$ is in general not contained in the piecewise affine law for horizon $N+1$.
    We show that backward dynamic programming does, in contrast, result in a useful structure for the set of the active sets that defines the solution. 
    %
    %
    Essentially, every active set for the problem with horizon $N+1$ results from extending an active set for horizon $N$, if the constraints are ordered stage by stage.   
    Consequently, the set for horizon $N+1$ can be found by only considering the constraints of the additional stage. 
    Furthermore, it is easy to detect which polytopes and affine pieces are invariant to increasing the horizon, 
    and therefore persist in the limit $N\rightarrow\infty$. 
    Several other aspects of the structure of the set of active sets become evident if the active sets are represented by bit tuples. 
    There exists, for example, a subset of special active sets that generates a positive invariant and persistent (i.e., horizon invariant) set around the origin. 
    It is very simple to identify these special active sets, and the positive invariant and persistent region can be found without solving optimal control or auxiliary optimization problems.
    The paper briefly discusses the use of these results in model predictive control. Some opportunities for uses in computational methods are also briefly summarized. 
%
%
%
%
	\end{abstract}

\section{Problem statement and introduction}
\label{sec:ProblemStatement}
We consider the constrained linear quadratic optimal control problem with finite and infinite horizons. 
The problem for finite horizon $N$ reads
$V^\star(x(0), [0, N]):=$
\begin{subequations}\label{eq:OCP}
\begin{align}\label{eq:OCPCost}
  \min\limits_{\substack{u(k),\, k= 0,\dots, N-1\\ x(k),\,k= 1, \dots, N}}
  & \half\|x(N)\|_P^2+ \half\sum\limits_{k= 0}^{N-1} \left(\|x(k)\|^2_Q+ \|u(k)\|^2_R\right)
\end{align}
subject to
\begin{align}
  x(k+1)= Ax(k)+ Bu(k),\, k= 0, \dots, N-1 
  \label{eq:OCPDynamics}
  \\
  u(k)\in\mathcal{U},\, k= 0, \dots, N-1
  \label{eq:OCPInputConstraints}
  \\
  x(k)\in\mathcal{X},\, k= 0, \dots, N-1 
  \label{eq:OCPStateConstraints}
  \\
  x(N)\in\mathcal{T},
  \label{eq:OCPTerminalConstraints}
\end{align}
\end{subequations}
where $x(0)$ is the given initial condition,
$x(k)\in\R^n$ and $u(k)\in\R^m$ are the state and input variables, respectively, and the matrices have the obvious dimensions. 
We assume $(A, B)$ to be controllable, $Q\succeq 0$, $R\succ 0$ and 
$\mathcal{X}$, $\mathcal{U}$ to be compact convex polytopes that contain the origin in their interiors. 
Furthermore,  
$P$ is assumed to be the solution to the discrete-time algebraic Riccati equation,
\REMARK{\begin{equation}\label{eq:DARE}
  P= A^\top\left(P-PB\left(R+B^\top P B\right)^{-1} B^\top P\right) A +Q
\end{equation}} 
and $\T\subset\R^n$ is assumed to be the largest set such that the solution to~\eqref{eq:OCP} 
and the solution to the unconstrained infinite-horizon problem are equal for all $x(0)\in\T$. 

The infinite-horizon problem results in the limit $N\rightarrow\infty$ if the first term in~\eqref{eq:OCPCost} and~\eqref{eq:OCPTerminalConstraints} are omitted. 
The unconstrained infinite-horizon problem results if~\eqref{eq:OCPInputConstraints} and~\eqref{eq:OCPStateConstraints} are also omitted. 
\REMARK{In the infinite-horizon cases, we need to replace min by inf.}

We briefly recall that $u= K_\infty x$ with
\begin{equation*}
  K_\infty= -\left(B^\top P B+ R\right)^{-1}B^\top P A
\end{equation*}
is the state feedback that solves the unconstrained infinite-horizon problem,
 (see, e.g., \cite[chapter 4]{Bertsekas2005}). 
Furthermore, we recall that 
\begin{equation}\label{eq:TerminalSet}
	  \T= \{\xi\in\X | (A+ BK_\infty)^k \xi\in\X_{\U}, k\ge 0\}
\end{equation}
where $\X_{\U}= \left\{\xi\in\X | K_\infty \xi\in\U\right\}$~\cite{Sznaier1987}. Some properties of $\T$ are summarized in the notation section.

Let $\bigu_N^\star(x(0))$  refer to the vector in $\R^{mN}$ that results from stacking the optimal input sequence
\begin{equation}\label{eq:FiniteOptimalInputSequence}
  u^\star(0),\, u^\star(1),\dots,\, u^\star(N-1)
\end{equation}
for \eqref{eq:OCP}. 
Let $\F_N$ refer to the set of initial states $x(0)$ for which~\eqref{eq:OCP} has a solution, where $\F_N\ne\emptyset$ 
since $\F_N\supseteq\dots\supseteq\F_1\supseteq\T\ne\emptyset$.
\REMARK{
  Show that the resulting closed-loop system inherits the stability properties that pertain on $\mathcal{T}$. In short, show stability.
}

The paper addresses the following problem:
It is known that $\bigu_N^\star:\mathcal{F}_N\rightarrow\R^{mN}$ is a continuous piecewise affine function on a partition of $\mathcal{F}_N$ into a finite number of polytopes $\P_{N,1}$, $\P_{N,2}, \dots$~\cite{Bemporad2002-Automatica} (see also~\cite{Malanowski1987}). 
It is an obvious question whether this piecewise structure can be built up iteratively 
(i.e., starting from $\bigu^\star_0:\mathcal{F}_0\rightarrow\R^{mN}$ with $\mathcal{F}_0= \T$ and finding $\F_1, \F_2, \dots$) 
or recursively (i.e., starting from some $\F_N$ and investigating $\F_{N-1}, \F_{N-2}, \dots$). 
Unfortunately, the piecewise affine and polytopic geometry of $\bigu_N^\star$ and $\bigu_{N+1}^\star$ are not related in any obvious way. 
For example, a polytope for horizon $N$ may or may not be a polytope for horizon $N+1$ (see Figure~\ref{fig:PolytopicPartitions}). 
It is the purpose of the paper to explain that the sought-after structure does indeed exist for the active sets of~\eqref{eq:OCP}, and to relate this algebraic  structure of the set of active sets to the geometric structure of polytopes and the affine functions defined on them. 
\begin{figure}
  \centering
  \scriptsize{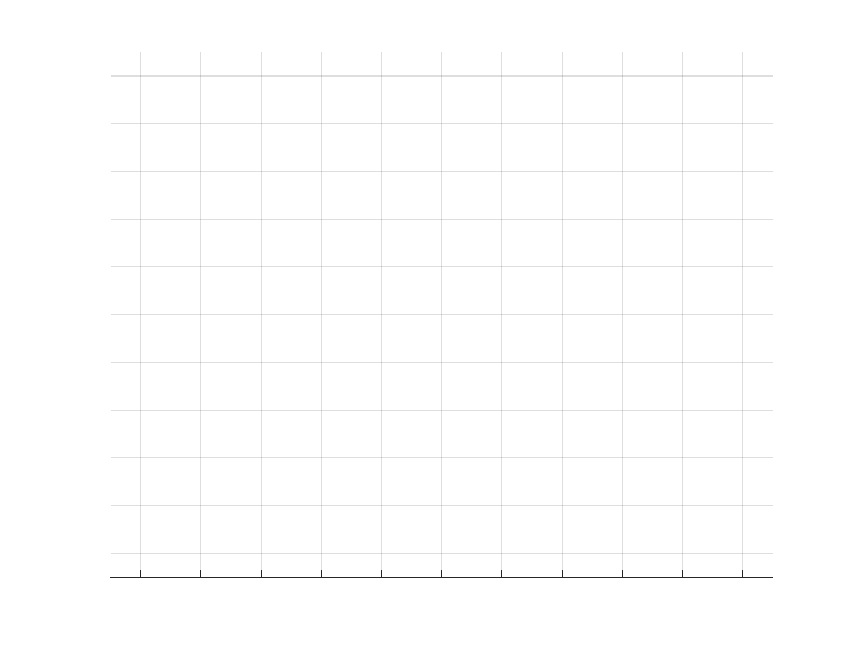}
  \caption{Partitions of the solutions to~\eqref{eq:OCP} for Example~\ref{ex:ActiveSetExtension} and $N=1, 2$. White polytopes exist for both $N=1$ and $N=2$, red polytopes exist for $N=1$ only, grey polytopes exist for $N=2$ only.}
  \label{fig:PolytopicPartitions}
\end{figure} 

Dynamic programming and the principle of optimality, which are instrumental in Section~\ref{sec:ComputationalAspects}, are fundamental techniques and have been standard tools for the derivation of the solution to the unconstrained infinite-horizon problem for decades (see, e.g., \cite{AndersonMoore1971}).
Their application to the constrained problem is hampered by the piecewise quadratic structure of the optimal cost function. 
Several publications have addressed this problem with a focus on model predictive control (MPC). 
Mu{\~n}oz de la Pe{\~n}a et al.~\cite{Munoz2004} present an algorithm for the explicit construction of the piecewise affine MPC law 
by backward dynamic programming
based on techniques proposed in~\cite{Mayne2002,Seron2002}. 
Bakara{\v{c}} et al.~\cite{Bakarac2018} show how to approximate the piecewise optimal cost function by a single quadratic function, which results in a considerable simplification of backward dynamic programming for the constrained case.
The inherent complexity of the problem is also evident from the number of candidate active sets (the powerset of $\{1, \dots, q\}$ if $q$ constraints exist) that need to be analyzed to find the active sets that actually define the optimal solution. 
Gupta et al.~\cite{Gupta2011} show that many candidate active sets can be disregarded if the powerset is organized as a tree and subtrees are pruned whenever they stem from a candidate set that is not an active set (see also \cite{Feller2013,Herceg2015,Oberdieck2017}). 

The results presented here are also based on an analysis of the set of active sets, but we focus on the structure of the solution of the optimal control problem~\eqref{eq:OCP}, specifically on the structure of the set of the active sets. 
The statements in Section~\ref{subsec:StagewiseConstruction} apply to a larger class of problems than introduced above (a class that is often analyzed in MPC after pragmatically relaxing requirements for stability). 
The properties summarized so far are then exploited in Section~\ref{sec:Persistency} (see the remark at the end of Section~\ref{subsec:StagewiseConstruction}). Section~\ref{sec:ComputationalAspects}  briefly summarizes some computational aspects. Some conclusions and opportunities for future work are stated in Section~\ref{sec:Conclusions}. 

\REMARK{Exploit the order of the constraints in a way reminiscent of the structure exploitation proposed in Wang and Boyd.}

\REMARK{Consider the minimum time problem like this: Note that we consider the problem to optimally steer the system to $\mathcal{T}$ in a finite number of at most $N$ steps and assume a known controller (if any such controller is necessary) can take over on $\mathcal{T}$. Since the contribution for $N\rightarrow\infty$ on $\mathcal{T}$ is not taken into account, it is actually more consistent not to account for the cost function contributions after $\mathcal{T}$ has been entered. It is therefore meaningful to consider the minimum time problem.
  \begin{equation}\label{eq:MinimumTimeProblem}
    \begin{split}
      \mbox{For any $x\in\mathcal{X}_f$, find the smallest $N_{\min} \le N$ }\\
      \mbox{such that~\eqref{eq:OCP} is feasible.}   
    \end{split}
  \end{equation}
which is, however, treated in a separate document.}
\REMARK{Show (once and for all) which cases may arise when LICQ fails. Does failure of LICQ imply a non fulldim polytope, for example? (What is the claim in Florian2016, p.2, top, right column? What is the claim in Gupta et al.?)
Use MPC (rather than artificial QP) examples and point this aspect out in the summary?}
\REMARK{
  Check whether it is meaningful to consider the constraints to be a master set of possible inequalities from which those inequalities are selected that define the current region of interest.
}
\REMARK{Show an optimal solution to~\eqref{eq:OCP} is not in general time-optimal on $\mathcal{F}_k$ for $k< N$.}

\subsection*{Notation and preliminaries}
The terminal set $\T$ introduced in~\eqref{eq:TerminalSet} can be  computed with the procedure proposed in~\cite{Gilbert1991}. 
It does not depend on $N$ and $\T\subseteq\X$ by definition~\cite{Chmielewski1996}.

Problem~\eqref{eq:OCP} is finite-dimensional and thus solved by a finite input sequence~\eqref{eq:FiniteOptimalInputSequence} 
and a corresponding finite state sequence that results with~\eqref{eq:OCPDynamics}. 
For comparisons to the infinite-horizon problems, these sequences are extended to
\begin{equation}\label{eq:InfiniteOptimalSequences}
  u^\star(k),\, x^\star(k) \text{ for all } k\ge 0
\end{equation}
by setting $u^\star(l)= K_\infty x^\star(l)$, $x^\star(l+1)= (A+BK_\infty) x^\star(l)$, $l\ge N$. 
We collect three basic statements about the optimal input sequence~\eqref{eq:FiniteOptimalInputSequence} for later use:
	(i)~The sequence $u^\star(1), \dots, u^\star(N-1)$ is optimal for~\eqref{eq:OCP} with horizon $N-1$ and initial condition $x^\star(1)$. 
	More generally, $u^\star(l), \dots, u^\star(N-1)$ is optimal for~\eqref{eq:OCP} with horizon $N-l$ and initial condition $x^\star(l)$, where $l\in\{0, N-1\}$ is arbitrary. 
	(ii)~In general \eqref{eq:FiniteOptimalInputSequence} is not equal to the first elements in the optimal input for~\eqref{eq:OCP} with horizon $N$ and initial condition $x^\star(1)$. 
	(iii)~In general, \eqref{eq:FiniteOptimalInputSequence} is not equal to the first elements in the optimal input sequence for~\eqref{eq:OCP} with horizon $N+1$ and initial condition $x(0)$. 
\REMARK{Remark (ii) above is never used in the paper.}
\REMARK{Double check which properties of $\T$ are needed. We may need  $0\in\text{int}\,\T$, and p.i. under $(A+BK_\infty)$. Ideally, $\T$ is a compact convex polytope that contains the origin in its interior under the stated assumptions. 
Sznaier1987 introduce $\T$ in the same fashion as Chmielewski, introduce $\F_\infty$, claim but do not prove equality of the constrained and uncontrained problem on $\T$, claim but do not prove the existence of an open ball around the origin in $\T$. }

	We need to state (iii) more precisely and in a more technical fashion for later use. According to 
	Lemma 2.2 in \cite{Chmielewski1996}\footnote{The condition $x(N)\in\text{int}\,\T$ cannot be replaced by $x(N)\in\T$ as in~\cite{Chmielewski1996}. 
	See Example~\ref{ex:PersistentActiveSets}.}
	\begin{equation}\label{eq:Lemma2.2Chmielewski1996} 
	  x^\star(N)\in\text{int}\,\mathcal{T} \text{ implies } \eqref{eq:InfiniteOptimalSequences} \text{ are equal for $N$ and $N+1$,}
	\end{equation}
	and thus for all $N+l$, $l\ge 0$ and in the limit $N\rightarrow\infty$,
	but
	\begin{equation*}
	  \text{\eqref{eq:Lemma2.2Chmielewski1996} does in general not hold for $x^\star(N)\in\partial \mathcal{T}$.}
	\end{equation*}
	It may seem pedantic to consider the boundary $\partial T$ separately, since $\partial \mathcal{T}$ is a subset of $\mathcal{T}$ and $\mathcal{X}$ with measure zero. 
	There often exist, however, full dimensional polytopes $\mathcal{P}_{N,i}$ in the piecewise affine solution such that $x^\star(N)\in\partial T$ for all $x(0)\in\mathcal{P}_{N,i}$ (see Example~\ref{ex:PersistentActiveSets} and Figure~\ref{fig:PersistentActiveSets}).
	Disregarding $\partial T$ may therefore lead to full-dimensional holes in the affine solution.

\REMARK{(Check for missing assumptions: The proof of lemma~\ref{lem:PersistentActiveSets} requires $(A+BK_\infty)x\in\text{int}\,\T$ and $K_\infty x\in\text{int}\,\U$ for all $x\in\T$.}

\REMARK{Symbols with underscores such as $\bigY$ are used for $N$ here and later $N+1$.}
For any $N$, there exist $\bigH$, $\bigY$, $\bigF$, $\bigG$, $\bigE$, $\bigw$ such that problem~\eqref{eq:OCP} can equivalently be stated as the quadratic program
\begin{subequations}\label{eq:parametricQP}
  \begin{align}
    & \min_{\bigu} \half x^\top(0)\bigY x(0)+ \half\bigu^\top \bigH\bigu+ x(0)^\top\bigF\bigu
  \\
  \label{eq:parametricQPconstraints}
  & \text{subject to } \bigG \bigu\le \bigw+ \bigE x(0)
  \end{align}
\end{subequations}
after substituting~\eqref{eq:OCPDynamics},  
where $\bigu= (u^\top(0), \dots, u^\top(N-1))^\top$, and where the state sequence that results in~\eqref{eq:OCP} can be determined with~\eqref{eq:OCPDynamics}. $H$ is positive definite under the assumptions stated for~\eqref{eq:OCP}~\cite{Bemporad2002-Automatica}. 
Consequently, \eqref{eq:parametricQP} has a unique solution for all $x(0)\in\mathcal{F}_{N}$, which we denote $\bigu_N^\star(x(0))$ in accordance with $\bigu_N^\star:\mathcal{F}_N\rightarrow\R^{mN}$ introduced above.
Let $q$ refer to the number of inequality constraints in~\eqref{eq:OCP} and~\eqref{eq:parametricQP}. 
Let $q_{\U}$, $q_{\X}$ and $q_{\T}$ refer to the number of halfspaces (i.e., inequalities) required to define
$\U$, $\X$ and $\T$, respectively. 
Polytopes are understood to be the intersection of a finite number of halfspaces and bounded.  
\REMARK{Introduce a statement on redundant halfspaces.}

A constraint $i$ is called active (resp.\ inactive) for an $x(0)\in\mathcal{F}_N$
if $\bigG_i \bigu_N^\star(x(0))= \bigw_i+ \bigE_i x(0)$ (resp. $\bigG_i\bigu_N^\star(x(0))< w_i+ E_i x(0)$), where $G_i$, $w_i$, $E_i$ etc.\ refer to the $i$-th row of the respective matrix or vector. 
A constraint $i$ is called weakly active if it is active and its multiplier $\sigma_i$ introduced in~\eqref{eq:KKTparametricQP} below is zero.
$G_\A$, $G_\I$ etc.\ refer to the submatrix of $G$ with rows indicated in $\A$ and $\I$, respectively. 
For a given $x(0)\in\mathcal{F}_N$ , let $\activeSet$ and $\inactiveSet$ (or  $\mathcal{A}(x(0))$ and $\inactiveSet(x(0))$ where needed) refer to the set of active respectively inactive constraints. 
We say $\activeSet$ ($\inactiveSet$) is an active (inactive) set for~\eqref{eq:parametricQP}
and later~\eqref{eq:OptProblemBackwardInductionStep} if there exists an $x(0)\in\mathcal{F}_N$ with this set of active (inactive) constraints. 

We assume the reader to be familiar with the Karush-Kuhn-Tucker (KKT) conditions for problems with inequality and mixed inequality and equality constraints (see, e.g., \cite[Section 4.2.13]{Bazaraa2006}.
If the active set $\activeSet(x(0))$ for~\eqref{eq:parametricQP} is known for an initial condition $x(0)$,  the KKT conditions can equivalently be stated in the form
\REMARK{Is the equivalence of KKT conditions and~\eqref{eq:KKTparametricQP} shown in one of our early papers? Double check if the case with nonempty weakly active sets is included in the equivalence.}
\begin{equation}\label{eq:KKTparametricQP}
  \begin{split}
  \bigH\bigu + \bigG^\top\sigma+ \bigF^\top x(0)= 0 
  \\
  \bigG_\activeSet \bigu- \bigw_\activeSet- \bigE_\activeSet x(0)= 0
  \\
  \bigG_\inactiveSet \bigu- \bigw_\inactiveSet- \bigE_\inactiveSet x(0)\le 0
  \\ 
  \sigma_\inactiveSet= 0,\quad
  \sigma_\activeSet\ge 0
  \end{split}
\end{equation}
with multipliers $\sigma\in\R^q$ (see, e.g., \cite{Gupta2011}). 
They are solved by one of the affine functions that constitute the piecewise affine $\bigu_N^\star:\mathcal{F}_N\rightarrow\R^{mN}$ for one of the polytopes $\mathcal{P}_{N,i}$~\cite{Bemporad2002-Automatica}. 
It is therefore meaningful to say $\mathcal{A}$ defines a polytope $\mathcal{P}$ and the optimal $\bigu^\star_N(x(0))$ for all $x(0)\in\P$ and to refer to this polytope by $\P(\A)$. 
More precisely, $\P(\A)$ is defined as the relative interior of the set of initial states such that~\eqref{eq:KKTparametricQP} has a solution for the active set $\A$, which implies $\P(\A)$ is relative open. 
Because the solution to~\eqref{eq:OCP} is continuous, the affine law $x(0)\rightarrow \bigu_N^\star(x(0))$ can be extended from $\text{int}\,\P(\A)$ to the boundary of $\P(\A)$. 
Cumbersome statements about boundaries can be avoided with these definitions. 
For example, the interior of the central polytope in Figure~\ref{fig:PolytopicPartitions} and the optimal solution on its closure are defined by a single $\A$,
while eight different active sets (for 4 vertices and 4 facets), which all define the same optimal solution as $\A$, exist on its boundary. 
By using the notions "relative interior" and "relative openness"  
the statements in the paper carry over to lower-dimensional polytopes.\footnote{Proposition~\ref{prop:ActiveSetExtension}, for example, applies to active sets that define lower than $n$-dimensional polytopes.}
Since $\X$, $\U$ and $\T$ are used in the literature as defined in Section~\ref{sec:ProblemStatement} (and thus closed, full-dimensional, and their relative interiors are their interiors), we use the notation "$\text{int}\,\X$" etc.\ to refer to their interiors explicitly, while all other polytopes are understood to be open. 
\REMARK{we avoid stating the explicit expressions for $\mathcal{P}_{N,i}$ and $\bigu\star(x)$ on it, because they are not needed in the paper, and because discussing technicalities such as the row rank of $G_\activeSet$ and the possible non-uniqueness of the multipliers can be avoided (see~\cite{Bemporad2002-Automatica,Tondel2003-Automatica}).} 
%

Active sets of constraints are stated as tuples of bits. 
This proves to be convenient when considering the constraints stage by stage in~\eqref{eq:OCP} and the infinite-horizon problem.
For example, a tuple of $q$ bits $\a= (\a_1, \dots, \a_q)$ uniquely represents a set of active constraints 
$\mathcal{A}\subseteq\{1, \dots, q\}$, where 
\begin{equation}\label{eq:BitTupels}
  \a_i=\left\{\begin{array}{rl}
    1 & \text{if } i\in\A 
    \\
    0 & \text{otherwise}
  \end{array}\right.
\end{equation}
The concatenation of two or more tuples, say, $\a= (\a_1, \dots, \a_q)$ and $\a^\prime= (\a^\prime_1, \dots, \a^\prime_{q^\prime})$ is denoted and understood as
  $\a\a^\prime= (\a_1, \dots, \a_q, \a^\prime_1, \dots, \a^\prime_{q^\prime})$.
We use $\a$ and $\A$, $G_\a$ and $G_\A$, $\P(\A)$ and $\P(\a)$ etc.\ interchangeably. 

\section{The structure of the set of active sets}\label{sec:StructureOfSetOfActiveSets}
\subsection{Stagewise active set construction}\label{subsec:StagewiseConstruction}
We illustrated with Figure~\ref{fig:PolytopicPartitions} that 
the optimal feedback law and its polytopes for~\eqref{eq:OCP} with horizon $N$ are not contained in the law and polytopes for $N+1$. Such a property does hold for the active sets, however. This is stated more precisely in Proposition~\ref{prop:ActiveSetExtension}. 
As a preparation, the order of the constraints has to be agreed on. We stress that we can fix the order of the constraints without restriction, since 
the optimal solutions to~\eqref{eq:OCP} and \eqref{eq:parametricQP} are invariant to changing this order. 
Apart from the order stated in~\eqref{eq:OCPInputConstraints}--\eqref{eq:OCPTerminalConstraints}, it is natural to order the constraints stage by stage, i.e.,  
\begin{equation}\label{eq:ForwardConstraintOrder}
  \begin{split}
    x(0)&\in\X,\, u(0)\in\U
    \\
    x(1)&\in\mathcal{X},\, u(1)\in\mathcal{U}
    \\
    &\vdots
    \\
    x(N-1)&\in\X,\, u(N-1)\in\U
    \\
    x(N)&\in\T
  \end{split}
\end{equation}
where all but the last line correspond to $q_\X+ q_\U$ halfspace constraints, and the last line corresponds to $q_\T$ such constraints. 
\REMARK{Introduce \textit{stage}, where stage $N$ refers to the stage with the terminal constraints and stage $N-1$ to the last stage before it.}
\REMARK{Point out where these orders appear; some are natural in implementations. Note that some of them are useful for fast online implementations, but not so useful for EMPC.}
\REMARK{State: Note that the order within each stage does not affect the results of the paper.}

\REMARK{
    The following lemma could be stated before Proposition~\ref{prop:ActiveSetExtention}. The lemma essentially paraphrases the 'principle of optimality' for active sets. 
	\begin{lem}
		Consider the optimal control problem~\eqref{eq:OCP} for horizons $N$ and $N+1$. 
		Assume without restriction the constraints are ordered as in~\eqref{eq:ForwardConstraintOrder}. 
		Let $\a_{N+1}$ be an arbitrary active set of the problem with horizon $N+1$ and partition $\a_{N+1}$ according to
		\begin{equation*}
		  \a_{N+1}= \a\a_N,
		\end{equation*}
		where $\a$ and $\a_N$ have lengths $q_\X+ q_\U$ and $(N-1)(q_\X+ q_\U)$. Then, for any $x(0)\in\P_{N+1}(\a_{N+1})$, 
		the optimal solution $\bigu^\star(x^\star(1))$ is defined by $\a$ (and this statement can be iterated for $x^\star(l)$, $l\ge 1$, as in Corollary~\ref{cor:ShrinkingActiveSets}. 
	\end{lem}	
} 

\begin{prop}\label{prop:ActiveSetExtension}
Consider the optimal control problem~\eqref{eq:OCP} for horizons $N$ and $N+1$. 
Assume without restriction the constraints are  ordered as in~\eqref{eq:ForwardConstraintOrder}. 
Then for every active set $\a_{N+1}$ of the problem with horizon $N+1$ there exists an active set $\a_{N}$ for the problem with horizon $N$ such that  
  \begin{equation}\label{eq:ActiveSetExtension}
    \a_{N+1}= \a\a_{N} 
  \end{equation}
 for some $\a$ of length $q_{\X}+q_{\U}$.  
\end{prop}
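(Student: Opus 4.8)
The plan is to use the principle of optimality (statement (i) in the preliminaries) to reduce the horizon-$N+1$ problem to a horizon-$N$ problem after one forward step. Fix an active set $\a_{N+1}$ of the problem with horizon $N+1$, and pick any $x(0)\in\P_{N+1}(\a_{N+1})$. Write the optimal input sequence as $u^\star(0), u^\star(1), \dots, u^\star(N)$ and the corresponding state sequence $x^\star(0), \dots, x^\star(N+1)$. Because the constraints are ordered stage by stage as in~\eqref{eq:ForwardConstraintOrder}, the first $q_\X+q_\U$ bits of $\a_{N+1}$ record exactly which of the constraints $x(0)\in\X$, $u(0)\in\U$ are active at this optimal solution; call this block $\a$, so that $\a_{N+1}=\a\,\a_N'$ where $\a_N'$ has length $(N-1)(q_\X+q_\U)+q_\T$, i.e.\ the length of a horizon-$N$ active set.

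First I would invoke statement (i): the tail $u^\star(1), \dots, u^\star(N)$ is optimal for~\eqref{eq:OCP} with horizon $N$ and initial condition $x^\star(1)$. Hence $x^\star(1)\in\F_N$, and the remaining bits $\a_N'$ of $\a_{N+1}$ are precisely the active/inactive pattern of the constraints $x(1)\in\X, u(1)\in\U, \dots, x(N)\in\X, u(N-1)\in\U, x(N+1)\in\T$ evaluated at this tail solution. But those are exactly the constraints of the horizon-$N$ problem with initial state $x^\star(1)$, in the same stagewise order. Therefore $\a_N'$ is an active set for the horizon-$N$ problem, witnessed by the initial condition $x^\star(1)$: I would set $\a_N:=\a_N'$ and conclude $\a_{N+1}=\a\,\a_N$ with $\a$ of length $q_\X+q_\U$, which is the claim.

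The one subtlety I would be careful about is matching the notion of "active set" as used in the proposition (which, per the earlier discussion, refers to the \emph{relative interior} $\P(\A)$ and includes possibly lower-dimensional polytopes and weakly active constraints) with what the principle of optimality delivers. Concretely, an $x(0)$ taken from $\P_{N+1}(\a_{N+1})$ maps to some $x^\star(1)$, but I need $x^\star(1)$ to lie in $\P_N(\a_N)$ rather than merely on its boundary, so that $\a_N$ is genuinely an active set in the paper's sense. Since the active set of the tail solution is read off directly from strict-versus-equality satisfaction of the horizon-$N$ constraints and the zero-versus-positive pattern of the corresponding multipliers (which are inherited from the KKT system~\eqref{eq:KKTparametricQP} for horizon $N+1$), the tail solution does solve the horizon-$N$ KKT system~\eqref{eq:KKTparametricQP} for the active set $\a_N$, and by the definition of $\P(\A)$ as the relative interior of the set of such initial states, $x^\star(1)\in\P_N(\a_N)$; hence $\P_N(\a_N)\neq\emptyset$ and $\a_N$ is an active set. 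Verifying this careful bookkeeping — that the KKT data for horizon $N+1$ restricts to valid KKT data for horizon $N$ with exactly the claimed active pattern, including the weakly active case — is the main obstacle, and it rests on writing the quadratic program~\eqref{eq:parametricQP} for horizon $N+1$ in a form where the variables $u(1),\dots,u(N)$ and their constraints appear as an embedded copy of the horizon-$N$ program parametrized by $x(1)=Ax(0)+Bu(0)$.

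Finally I would remark that the argument iterates: applying the same reduction to $x^\star(l)$, $l\ge 1$, peels off one block of $q_\X+q_\U$ bits at a time, which is the content foreshadowed for Corollary~\ref{cor:ShrinkingActiveSets}; but for the proposition itself a single step suffices.
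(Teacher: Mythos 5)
Your argument is correct, and it is essentially the shorter proof that the paper explicitly acknowledges but deliberately does not give: you reduce the horizon-$(N+1)$ problem to the horizon-$N$ problem via the principle of optimality (statement (i) of the preliminaries plus time-invariance of the data), whereas the paper splits off stage $0$ to form the composite problem~\eqref{eq:OptProblemBackwardInductionStep} and verifies by hand that the trailing block of its KKT conditions~\eqref{eq:KKTInductionStep} is exactly the KKT system~\eqref{eq:KKTparametricQP} of the horizon-$N$ QP with parameter $\zeta=x^\star(1)$, so that $\lb{\a}$ is an active set for that QP. What the paper's longer route buys is the explicit demonstration that constraint qualifications and weakly active constraints play no role here; what your route buys is brevity, since the optimality conditions are never needed. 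Two small corrections. First, the length bookkeeping: the trailing block $\a_N$ has length $N(q_\X+q_\U)+q_\T$, not $(N-1)(q_\X+q_\U)+q_\T$. Second, the subtlety you raise about $x^\star(1)$ lying in the relative interior $\P_N(\a_N)$ is both unnecessary and not established by your argument: satisfying the KKT system for the index set $\a_N$ only places $x^\star(1)$ in the set whose relative interior defines $\P_N(\a_N)$, not necessarily in that relative interior. But the proposition only asserts that $\a_N$ is an \emph{active set}, and by the paper's definition this merely requires the existence of some initial state in $\F_N$ whose set of active constraints equals $\a_N$; your witness $x^\star(1)$ does exactly that, because activity is read off from the unique primal optimizer alone (weak activity concerns multipliers, but a weakly active constraint is still active), so the relative-interior discussion can simply be dropped.
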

\REMARK{Not a good idea to use both $\ell$ and $l$.}
\begin{proof}
  We introduce the abbreviations $\ell_\mathcal{T}(\xi)= \half \|\xi\|_P^2$, $\ell(\xi, \mu)=\half( \|\xi\|_Q^2+ \|\mu\|_R^2)$ and
  generalize~\eqref{eq:OCP} to $V^\star(\xi, [N_1, N_2])):=$
  \begin{subequations}\label{eq:GeneralizedOCP}
    \begin{align}
      \min\limits_{\substack{x(N_1+1), \dots, x(N_2),\\ u(N_1), \dots, u(N_2-1)}}\quad 
      & \ell_\mathcal{T}(x(N_2))+ \sum_{k=N_1}^{N_2-1}\ell(x(k), u(k))
      \end{align}
    subject to
    \begin{align}
      x(N_1)= \xi 
      \\
      x(k+1)= Ax(k)+ Bu(k)&,\, k= N_1, \dots, N_2-1  
      \label{eq:GeneralizedOCPDynamics}
      \\
      u(k)\in\mathcal{U}&,\, k= N_1, \dots, N_2-1
      \label{eq:GeneralizedOCPInputConstraints}
      \\
      x(k)\in\mathcal{X}&,\, k= N_1, \dots, N_2-1
      \\
      x(N_2)\in\mathcal{T} &
      \label{eq:GeneralizedOCPTerminalConstraints}
    \end{align}
  \end{subequations}
  for $N_2> N_1$. Since $A$, $B$, $P$, $Q$, $R$, $\U$, $\X$ and $\T$ are time-invariant, $V^\star(\xi, [N_1, N_2])$ only depends on $N_2-N_1$, i.e., 
  \begin{equation}\label{eq:TimeInvariance}
    V^\star(\xi, [N_1, N_2])= V^\star(\xi, [l, N_2-N_1+l])
  \end{equation}
  for all $l\in\N\cup\{0\}$ and all $\xi\in\mathcal{F}_{N_2-N_1}$. 
  Now consider the case $N_1= 0$, $N_2= N+1$, i.e., $V^\star(\xi, [0, N+1])$. 
  Expressing $V^\star(\xi, [0, N+1])$ in terms of $V^\star(\zeta, [1, N+1])$ in a fashion similar to backward dynamic programming yields
  $V^\star(\xi, [0, N+1])=$
  \begin{subequations}\label{eq:BackDynProgStep}
    \begin{align}\label{eq:BackDynProgStepCost}
      \min\limits_{x(1), u(0)}\Big(\ell(x(0), u(0))+ V^\star(\zeta, [1, N+1])\Big)
	\end{align}
	subject to
	\begin{equation}\label{eq:BackDynProgStepConstraints}\begin{split}
	  x(1)= Ax(0)+Bu(0),\, x(0)= \xi,\, \zeta= x(1)
	  \\
	  x(0)\in\X,\, u(0)\in\U
    \end{split}\end{equation}
  \end{subequations}
  Since $V^\star(\zeta, [1, N+1])= V^\star(\zeta, [0, N])$ according to~\eqref{eq:TimeInvariance}, and 
  with the notation~\eqref{eq:parametricQP} for $V^\star(\zeta, [0, N+1])$, \eqref{eq:BackDynProgStepCost} can be replaced by 
  \begin{equation}\label{eq:BackDynProgHelper1}
    \begin{split}
    &\min\limits_{x(1), u(0)} \bigg(\ell(x(0), u(0))+
    \\
    &\quad\min\limits_{\bigu} \Big(
      \half \zeta^\top \bigY \zeta+ \half \bigu^\top \bigH \bigu+  \zeta^\top \bigF \bigu \text{ s.t. }
      \bigG \lb{u}\le \bigw+ \bigE \zeta
    \Big)
    \bigg),
  \end{split}
\end{equation}
where $\bigu= (u^\top(1), \dots, u^\top(N))^\top$ here. 
Just as there exist matrices $\bigH$, $\bigY$ etc. that transform $V^\star(\zeta, [0, N+1])$ into~\eqref{eq:parametricQP}, there exist $Y$, $H$, $G$, $E$ and $w$ such that
\begin{equation}\label{eq:BackDynProgHelper2}
  \ell(x(0), u(0))= \half \xi^\top Y\xi+ \half u(0)^\top H u(0)
\end{equation}  
and~\eqref{eq:BackDynProgStepConstraints} can be stated as
\begin{equation}\label{eq:BackDynProgHelper3}\begin{split}
  Gu(0)&\le w+ E\xi
  \\
  \zeta&= A\xi+ Bu(0)
\end{split}\end{equation}
where~\eqref{eq:BackDynProgStepConstraints} comprises $q_\X+ q_\U$ constraints if $q_\X$ and $q_\U$ halfspaces define $\X$ and $\U$, respectively. 
\REMARK{Put the expressions for $Y$ etc. in a separate document for better documentation: $Y= Q$, $H=R$ and $Gu(0)\le E\xi+ w$ reads
\begin{equation*}
  \begin{bmatrix}
    E^\U \\ 0
  \end{bmatrix}
  u(0)\le
  \begin{bmatrix}
    0 \\ -E^\X
  \end{bmatrix}
  x(0)+
  \begin{bmatrix}
    v^\U \\ v^\X
  \end{bmatrix}
\end{equation*}
assuming $x\in\X$ and $u\in\U$ can be stated as $E^\X\le v^\X$ and $E^\U u\le v^\U$, respectively.} 
\REMARK{(Reader may ask why $F$ does not appear.)}
Combining~\eqref{eq:BackDynProgStep}--\eqref{eq:BackDynProgHelper3} yields
$V^\star(\xi, [0, N+1])=$
  \begin{subequations}\label{eq:OptProblemBackwardInductionStep}
    \begin{equation}\begin{split}
      \min\limits_{
	\red{\zeta},
	\blue{u(0)}, \green{\bigu}
      }\quad 
      & \left(
	  \blue{\half \xi^{\top}Y\xi + \half u(0)^{\top} H u(0)}
	\right. 
	\\
	&\left.
	  + \green{\half \zeta^\top \bigY \zeta+ \half \bigu^\top \bigH \bigu+ \zeta^\top \bigF \bigu}
	\right)
  \end{split}\end{equation}
  subject to
  \begin{align}
      & \blue{G u(0)\le w+ E\xi}\label{eq:OptProblemBackwardInductionStepConstraintsI}
      \\
      & \red{\zeta= A\xi+ Bu(0)}\label{eq:OptProblemBackwardInductionStepConstraintsII}
      \\
      & \green{\bigG \lb{u}\le \bigw+ \bigE \zeta}, \label{eq:OptProblemBackwardInductionStepConstraintsIII}
    \end{align}
  \end{subequations}
where the minimization with respect to~$\bigu$ can be applied to all terms of the cost function without restriction. 
\REMARK{Comment on necessity and sufficiency of the first order conditions.}
  The KKT conditions for~\eqref{eq:OptProblemBackwardInductionStep} read
    \begin{subequations}\label{eq:KKTInductionStep}
      \begin{align}
        &\begin{bmatrix}
          \bigY &  & \bigF 
          \\
           & H &  
          \\
          \bigF^\top & & \bigH 
        \end{bmatrix}
        \begin{bmatrix}
          \zeta \\ u(0) \\ \bigu
        \end{bmatrix}
        +
        \begin{bmatrix}
          & -\bigE^\top & I 
          \\
          G^\top & & -B^\top
          \\
          & \bigG^\top 
        \end{bmatrix}
        \begin{bmatrix}
          \lambda \\ \sigma \\ \tau 
        \end{bmatrix}
        = 0 
        \\
        \label{eq:JointInequalityConstraints}
    &\begin{bmatrix}
      G \\
      & \bigG
    \end{bmatrix}
    \begin{bmatrix}
      u(0) \\
      \bigu
    \end{bmatrix}
    - 
    \begin{bmatrix}
      E \\
      & \bigE
    \end{bmatrix}
    \begin{bmatrix}
	  \xi
	  \\
	  \zeta
    \end{bmatrix}
    -
    \begin{bmatrix}
      w\\
      \bigw
    \end{bmatrix}
    \le 0         
    \\
     & \zeta- A\xi- Bu(0)= 0
      \\
      &\lambda_i\left(Gu(0)-w-E\xi\right)_i= 0\text{ for all } i
      \\
      &\sigma_j\left(\bigG\bigu- \bigw- \bigE\zeta\right)_j = 0 \text{ for all } j
      \\
      &\lambda\ge 0
      \\
      &\sigma\ge 0,
      \end{align}
    \end{subequations}
where $\lambda$, $\tau$ and $\sigma$ are the multipliers for the conditions 
  \eqref{eq:OptProblemBackwardInductionStepConstraintsI},
  \eqref{eq:OptProblemBackwardInductionStepConstraintsII} and
  \eqref{eq:OptProblemBackwardInductionStepConstraintsIII}, respectively, $I$ denotes the unit matrix, and zero block matrices are omitted.
  \REMARK{Order of constraints in~\eqref{eq:OptProblemBackwardInductionStep} is different from order in optimality conditions.}
  Note that the inequality constraints~\eqref{eq:JointInequalityConstraints} are ordered as in~\eqref{eq:ForwardConstraintOrder}. 
  \REMARK{(Notation is not pretty anymore from here on, since $\alpha^-$ is now decorated with a minus while $\xi^-$ and $u^-$ no longer exist.)}
  Now let $\a_{N+1}$ be an arbitrary active set for~\eqref{eq:OptProblemBackwardInductionStep} 
  and let $\xi\in\mathcal{F}_{N+1}$ be an arbitrary initial condition such that $\a_{N+1}$ is the active set. 
  We partition $\a_{N+1}$ according to  
  \begin{equation}\label{eq:DesiredActiveSetPartition}
    \a_{N+1}= \a^-\lb{\a}, 
  \end{equation}
  where $\a^-$ corresponds to the $q_\X+ q_\U$ rows of $G$, $E$ and $w$ in~\eqref{eq:JointInequalityConstraints} and $\lb{\a}$ corresponds to the remaining rows in~\eqref{eq:JointInequalityConstraints}. 
  Let $\i^-$ and $\lb{\i}$ be the corresponding inactive sets. 
  Using these active and inactive sets, 
  the optimality conditions~\eqref{eq:KKTInductionStep} can equivalently be stated with separated active and inactive constraints 
  as in~\eqref{eq:KKTparametricQP}. 
  More precisely, there exist $\sigma^\star$, $\tau^\star$ and $\lambda^\star$ such that~\eqref{eq:KKTInductionStep} holds for 
  $\xi$, $u(0)^{\star}$, $\zeta^\star$, $\bigu^\star$, $\sigma^\star$, $\tau^\star$ and $\lambda^\star$ if and only if
  \begin{subequations}\label{eq:KKTBigProblem}
    \begin{align}
      & \red{
	\bigY\zeta^\star +\bigF\bigu^\star- \lb{E}^\top\sigma^\star+ \tau^\star = 0
      }
      \\
      & \blue{Hu(0)^{\star}+  G^\top_{\a^{-}}\lambda_{\a^{-}}- B^\top\tau^\star = 0}
      \\
      & \green{\bigH\bigu^\star+ \bigF^\top\zeta^\star + \bigG^\top_{\lb{\a}} \sigma^\star_{\lb{\a}}= 0}
      \label{eq:StationarityBigProblem}
      \\ 
      & \blue{\left(Gu(0)^{\star}-w-E\xi\right)_{\a^{-}} =0}
      \\
      & \green{\left(\bigG\bigu^\star- \bigw- \bigE\zeta\right)_{\lb{\a}} = 0}
      \label{eq:ActiveConstraintsBigProblem}
      \\
      & \blue{\left(Gu(0)^{\star}-w-E\xi\right)_{\i^-}\le 0}
      \\
      & \green{\left(\bigG\bigu^\star- \bigw- \bigE\xi^\star\right)_{\lb{\i}} \le 0}
      \label{eq:InactiveConstraintsBigProblem}
      \\
      & \red{\zeta^\star= A\xi+ Bu(0)^{\star}}
      \\
      & \blue{\lambda^\star_{\a^-}\ge 0, \quad \lambda^\star_{\i^-}= 0}
      \\
      & \green{\sigma^\star_{\lb{\a}}\ge 0, \quad \sigma^\star_{\lb{\i}}= 0}
      \label{eq:MultipliersBigProblem}
    \end{align}
  \end{subequations}
Since \eqref{eq:StationarityBigProblem}, \eqref{eq:ActiveConstraintsBigProblem}, \eqref{eq:InactiveConstraintsBigProblem}, 
\eqref{eq:MultipliersBigProblem} are the optimality conditions of \eqref{eq:parametricQP}, 
i.e., of $V^\star(\zeta, [0, N])$,
we have $\zeta= x(1)\in\F_N$ and 
$\lb{\a}$ is an active set of $V^\star(\zeta, [0, N])$. 
Since $\a_{N+1}$ was an arbitrary active set of~\eqref{eq:OptProblemBackwardInductionStep} and since $\a$ can be partitioned as in~\eqref{eq:DesiredActiveSetPartition}, claim~\eqref{eq:ActiveSetExtension} holds with 
$\alpha= \alpha^-$, $\lb{\a}= \a_{N}$. 
\end{proof}

\REMARK{Think about generalizations of Proposition~\ref{prop:ActiveSetExtension}. For example, if backward constraint order is assumed, \eqref{eq:ActiveConstraintHelper} is replaced by
  \begin{equation}\label{ActiveConstraintHelper2}
    \begin{bmatrix}
      & \bigG \\
      G
    \end{bmatrix}
    \left(\begin{array}{l}
      u^{-\star} \\
      \bigu^\star 
    \end{array}\right)
    \le
    \begin{bmatrix}
      & \bigE \\
      E
    \end{bmatrix}
    \left(\begin{array}{l}
	  \xi^-
	  \\
	  \xi
    \end{array}\right)
    +
    \begin{bmatrix}
      \bigw\\
      w
    \end{bmatrix},
  \end{equation}
  and the partition $\a= \lb{\a}\a^-$ is chosen instead of~\eqref{eq:DesiredActiveSetPartition}.  
} 
\REMARK{(Think about generalizations of Proposition~\ref{prop:ActiveSetExtension} that only require an assumption on the constraint order in the proof but not the claim itself. See TODO at this position in the text.)}
A shorter proof of Proposition~\ref{prop:ActiveSetExtension} can be stated based on~\eqref{eq:GeneralizedOCP}, \eqref{eq:TimeInvariance} and by referring to the the principle of optimality in dynamic programming (see, e.g., \cite[chapter 1.3]{Bertsekas2005}). 
In particular, the optimality conditions~\eqref{eq:KKTparametricQP} and~\eqref{eq:KKTInductionStep} would not be required in this case. 
We state the more detailed proof, because it shows that constraint qualifications and weakly active constraints play no role for Proposition~\ref{prop:ActiveSetExtension}, while they often result in special cases elsewhere
(see, e.g., 
\cite[section 3.4.1]{Gupta2011},
\cite[section 5]{Tondel2003-Automatica},
\cite[section 4.1.1]{Bemporad2002-Automatica}). A case where the linear independence constraint qualification 
(\cite[Section 5.2.1]{Bazaraa2006}, \cite[section 4.1.1]{Bemporad2002-Automatica})
fails but Proposition~\ref{prop:ActiveSetExtension} still applies is given in Example~\ref{ex:ActiveSetExtension}, part (ii). 

It is easy to see that truncating an active set by removing stages "on the left" as in~\eqref{eq:ActiveSetExtension} results in active sets that define the optimal solution to~\eqref{eq:OCP} on a shrinking horizon: 
\REMARK{Incorporate case $l= N$ into Cor.}
\begin{cor}\label{cor:ActiveSetsShrinkingHorizon}
  Consider~\eqref{eq:OCP} for horizon $N$ and assume the constraints to be ordered as in~\eqref{eq:ForwardConstraintOrder} without restriction.
  Let $\a_N$ be an arbitrary active set and partition $\a_N$ according to
  \begin{equation*}
  \a_{N}= \underbrace{\a_{N,0}}_{q_\X+ q_\U}
  \cdots\,
  \underbrace{\a_{N,N-1}}_{q_\X+ q_\U}
  \underbrace{\a_{N,N}}_{q_\T}. 
  \end{equation*}
  Let $x(0)$ be an arbitrary initial condition that results in the active set $\a_N$ and let 
  $x^\star(k)$, $k\ge 0$ introduced in~\eqref{eq:InfiniteOptimalSequences} refer to the optimal solution for $x(0)$.    
  Then, for all $l\in\{0, \dots, N-1\}$, the active set 
  \begin{equation*}
    \a_{N-l}= \a_{N,l}\dots\a_{N,N}
  \end{equation*}
  defines the optimal solution for~\eqref{eq:OCP} with horizon $N-l$ and initial condition $x^\star(l)$.  
\end{cor}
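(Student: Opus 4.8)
The plan is to obtain Corollary~\ref{cor:ActiveSetsShrinkingHorizon} by iterating, $l$ times, the one-step reduction that is already contained inside the proof of Proposition~\ref{prop:ActiveSetExtension}. That proof in fact establishes slightly more than the Proposition states: for the horizon-$(N+1)$ problem it shows that if $\xi=x(0)$ is an initial condition whose active set is $\a_{N+1}=\a^-\lb{\a}$, then the optimal successor state $\zeta=x^\star(1)=A\xi+Bu^\star(0)$ lies in $\F_N$ and $\lb{\a}$ is precisely the active set of $V^\star(x^\star(1),[0,N])$. Relabelling the horizon $N+1\rightsquigarrow N$, this reads: if $x(0)$ produces the active set $\a_N=\a_{N,0}\,\a'$ for horizon $N$, with $\a_{N,0}$ its first $q_\X+q_\U$ bits, then $x^\star(1)$ produces the active set $\a'$ for horizon $N-1$ and initial condition $x^\star(1)$. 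Call this the one-step reduction. Because left-truncating \eqref{eq:ForwardConstraintOrder} again yields an ordering of the form \eqref{eq:ForwardConstraintOrder}, the one-step reduction applies verbatim at every horizon.

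I would then run an induction on $l\in\{0,\dots,N-1\}$. For $l=0$ the claim is the hypothesis, since $x^\star(0)=x(0)$ and $\a_{N-0}=\a_N$. For the step, assume $\a_{N-l}=\a_{N,l}\cdots\a_{N,N}$ defines the optimal solution of the horizon-$(N-l)$ problem with initial condition $x^\star(l)$, and let $l\le N-2$. By statement~(i) of the notation and preliminaries section (the principle of optimality), together with the time-invariance \eqref{eq:TimeInvariance}, the optimal state trajectory of this horizon-$(N-l)$ problem started at $x^\star(l)$ is exactly $x^\star(l),x^\star(l+1),\dots,x^\star(N)$; in particular its first successor state is $x^\star(l+1)$. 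Applying the one-step reduction to the horizon-$(N-l)$ problem at $x^\star(l)$, and splitting $\a_{N-l}$ into its first $q_\X+q_\U$ bits $\a_{N,l}$ and the remainder $\a_{N,l+1}\cdots\a_{N,N}=\a_{N-(l+1)}$, we conclude that $\a_{N-(l+1)}$ defines the optimal solution of the horizon-$(N-l-1)$ problem with initial condition $x^\star(l+1)$. This is the claim for $l+1$, and since the one-step reduction is invoked only for horizons $N-l\ge 2$, the induction covers all $l$ up to $N-1$.

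The only point needing genuine care is the assertion that the tail $x^\star(l),x^\star(l+1),\dots$ of the globally optimal trajectory \emph{is} the optimal trajectory of the truncated problem started at $x^\star(l)$: without it the one-step reduction cannot be chained, since the reduction speaks about the optimal successor of the \emph{current} initial condition. This is precisely statement~(i), and \eqref{eq:TimeInvariance} is what lets us read the horizon $[l,N]$ as $[0,N-l]$ so that the subproblem is literally an instance of \eqref{eq:OCP}. Everything else — the bookkeeping of the bit-tuple blocks and the inheritance of the constraint order — is routine. An alternative would be to bypass Proposition~\ref{prop:ActiveSetExtension} and argue directly from \eqref{eq:GeneralizedOCP}, \eqref{eq:TimeInvariance} and the KKT conditions \eqref{eq:KKTparametricQP}, re-deriving the one-step reduction from scratch; but since the proof of the Proposition already supplies it, iterating is the shorter route.
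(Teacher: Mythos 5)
Your proposal is correct and follows essentially the same route as the paper: the paper also proves the $l=1$ case by observing that the proof of Proposition~\ref{prop:ActiveSetExtension} already shows the truncated tuple is the active set for the optimal successor state, relabels $N+1\rightarrow N$, and applies the one-step statement repeatedly. Your added care about the principle of optimality (statement~(i)) and the time-invariance~\eqref{eq:TimeInvariance} justifying the chaining simply makes explicit what the paper's "apply this case repeatedly" leaves implicit.
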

\begin{proof}
It suffices to prove the claim for $l= 1$ and to apply this case repeatedly. 
We showed in the proof of Proposition~\ref{prop:ActiveSetExtension} that 
$\a_N$ is the active set for the optimal successor state $\zeta= x^\star(1)$ and~\eqref{eq:OCP} with horizon $N$,
if $\a_{N+1}$ is the active set for $x(0)$ and horizon $N+1$. With the substitutions $N+1\rightarrow N$ and $N\rightarrow N-1$, the claim for $l= 1$ results. 
\end{proof}

Corollary~\ref{cor:ActiveSetsShrinkingHorizon} essentially extends remark (i) made in the notation section from the optimal sequences of inputs and states~\eqref{eq:InfiniteOptimalSequences} to active sets. The corresponding extension to polytopes appears in part (i) of Remark~\ref{rem:Polytopes}. 

Proposition~\ref{prop:ActiveSetExtension} and Corollary~\ref{cor:ActiveSetsShrinkingHorizon} are illustrated with Example~\ref{ex:ActiveSetExtension}. 
All active sets and their properties stated in Example~\ref{ex:ActiveSetExtension} can be checked with simple calcuations, which are not stated here.
\REMARK{The active set $\a_{N+1,j}$ in Proposition~\ref{prop:ActiveSetExtension} may respect the linear independence constraint qualification, while $\a_{N,i}$ does not. Consequently, $\a_{N+1,j}$ may define a full-dimensional polytope in the solution for horizon $N+1$, while the polytope for horizon $N$ defined by $\a_{N,i}$ is not full-dimensional. This is illustrated in Example~\ref{ex:ActiveSetExtension}.}
\REMARK{(There is a longer discussion of the LICQ case in the code at the end of the subsection.)}
\begin{example}\label{ex:ActiveSetExtension}
	Consider~\eqref{eq:OCP} for 
	\begin{equation*}
	  A= \begin{bmatrix}
	    -\half & \half 
	    \\
	    -\half & -\half 
	  \end{bmatrix},
	  \,
	  B= \begin{bmatrix}
	    1 \\ 1
	  \end{bmatrix},
	  \,
	\end{equation*}
	$\X= \{x\in\R^2; -10\le x_i\le 10, i= 1, 2\}$,
	$\U= \{u\in\R; -1\le u\le 1\}$, $Q$ is the identity matrix, $R= 0.1$ and $P$ and $\T$ are as in Section~\ref{sec:ProblemStatement}.  
	Note that $q_\X+ q_\U= 6$, and $q_\T= 4$ results for this example. 
	Figure~\ref{fig:ActiveSetExtension} shows the polytopes for $N= 1$ and $N= 2$, which illustrate the following cases of Proposition~\ref{prop:ActiveSetExtension}:
	
	(i) Active sets for horizon $N$ may be extended to exactly one, more than one, or no active set at all. 
	%
	For example, the following active set for $N= 1$ is extended to exactly one active set for $N= 2$: 
	\begin{equation*}
	  \begin{tabular}{lrr}
	  & $000000.0001$ & $(N= 1)$
	  \\
	  & $100000.000000.0001$ & $(N= 2)$
	  \end{tabular}
	\end{equation*}
	(green polytopes in Figure~\ref{fig:ActiveSetExtension}; dots are introduced every $q_\X+ q_\U= 6$ positions for convenience). 
	The following active set for $N=1$ is extended to three active sets for $N=2$:
	\begin{equation}\label{eq:ActiveSetExtensionSampleAs}
	  \begin{tabular}{lrr}
	  &  $100000.0000$ & $(N=1)$
	  \\
	  & $000000.100000.0000$ & $(N=2)$
	  \\
	  & $100000.100000.0000$ & $(N=2)$
	  \\
	  & $010000.100000.0000$ & $(N=2)$
	  \end{tabular}
	\end{equation}
	(yellow polytopes). 
	The active set 
	\begin{equation*}
		\begin{tabular}{lrr}
		 & $100000.000000.0001$ & $(N=2)$
		\end{tabular}
	\end{equation*}
	is not extended to any set for $N= 3$ (green polytope for $N=2$ in Figure~\ref{fig:ActiveSetExtension}b; $N= 3$ not shown). 
	
	(ii) Active sets that respect the linear independence constraint qualification (licq)  for $N+1$ may result from extending an active set that does not (or vice versa). 
	For example, the following active sets appear in the example
	\begin{equation*}
		\begin{tabular}{lrr}
		& $010000.0001$ & $(N=1)$
		\\
		& $000000.010000.0001$ & $(N=2)$
		\end{tabular}
	\end{equation*}
	and simple calculations show that the latter does and the former does not respect licq (blue polytopes). 
	
	We state the active sets in set notation for completeness. They read
	$\{10\}$,
	$\{1, 16\}$,
	$\{1\}$,
	$\{7\}$,
	$\{1, 7\}$,
	$\{2, 7\}$,
	$\{1, 16\}$,
	$\{2, 10\}$,
	$\{8, 16\}$ in the order they appear in above.
\end{example}
\begin{figure}
  \centering
  \subfloat[$N=1$]{\scriptsize{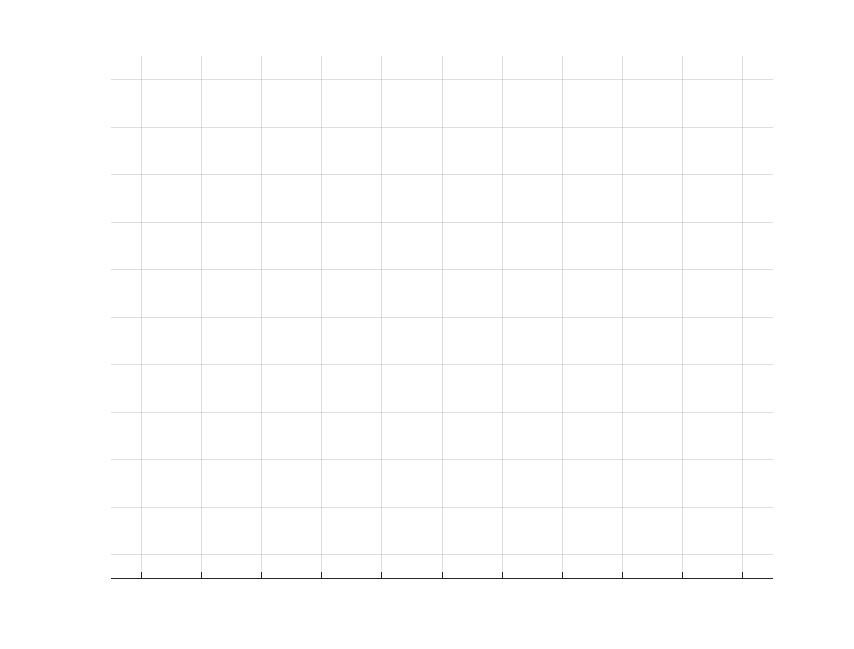}}\qquad
  \subfloat[$N=2$]{\scriptsize{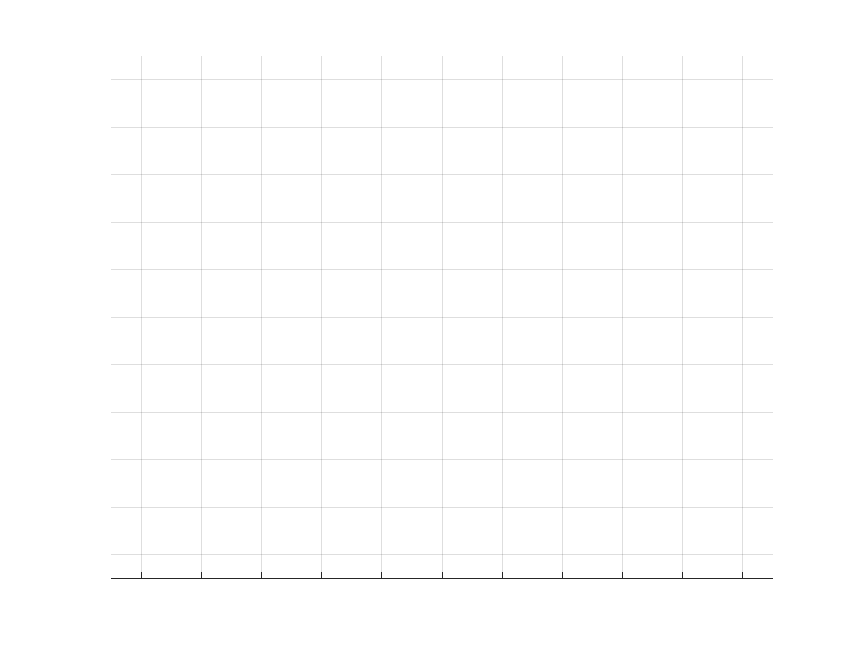}}\qquad
  \caption{Partitions of the solutions to~\eqref{eq:OCP} for Example~\ref{ex:ActiveSetExtension}.}
  \label{fig:ActiveSetExtension}
\end{figure} 

Proposition~\ref{prop:ActiveSetExtension} and Corollary~\ref{cor:ActiveSetsShrinkingHorizon} 
do not require the assumptions on $P$, $K_\infty$ and $\T$ stated in Section~\ref{sec:ProblemStatement} to hold 
and therefore apply to a larger problem class.   
All statements in Section~\ref{sec:Persistency}, in contrast, do require the assumptions on $P$, $K_\infty$ and $\T$ stated in Section~\ref{sec:ProblemStatement} to hold.  
Example~\ref{ex:ActiveSetExtension} respects the assumptions on $P$, $K_\infty$ and $\T$ just so the same example can be used throughout the paper. 

\REMARK{
	The statement on $\a_{N+1,j}= \tilde{\a}\a_{N,i}$, where $\a_{N+1,j}$ respects LICQ but $\a_{N,i}$ does not can be discussed in more detail:
	Assume the state and input variables are ordered in forward order, i.e., $\bigx= (x^\top(1), \dots, x^\top(N))^\top$ and $\bigu= (u^\top(0), \dots, u^\top(N-1))^\top$. The order in $\bigx$ need probably never be specified in the main body of the paper, and it will be more elegant not to do so. The order in $\bigu$ is specified, however. Assume the constraints are in forward order~\eqref{eq:ForwardConstraintOrder} in the present subsection. 
	The QP~\eqref{eq:OptProblemBackwardInductionStep}, repeated here for convenience,
	  \begin{equation}
	    \begin{array}{rl}
	      \min\limits_{
		\blue{\xi},
		\blue{u^-}, \bigu
	      }\quad 
	      & \left(
		  \blue{\half \xi^{-\top}Y\xi^-+ \half u^{-\top} H u^-+ \xi^{-\top}Fu^-}
		\right. 
		\\
		&\left.
		  + \green{\half \xi^\top \bigY \xi+ \half \bigu^\top \bigH \bigu+ \xi^\top \bigF \bigu}
		\right)
	      \\
	      \mbox{s.t.}\quad
	      & \blue{G u^-\le w+ E\xi^-}
	      \\
	      & \red{\xi= A\xi^-+ Bu^-}
	      \\
	      & \green{\bigG \lb{u}\le \bigw+ \bigE \xi},
	    \end{array}
	  \end{equation}
	can equivalently be stated as
	\begin{equation*}
	  \begin{split}
	    & V^\star(\xi^-, [N^\prime-1,N])
	    \\
	    =& \min_{\xi, \mu, \bigu} \quad
	    \half (\cdot) \left(\begin{array}{cc} Y \\ & \bigY\end{array}\right) \left(\begin{array}{c} \xi^{-}\\ \xi\end{array}\right)
	    + \half (\cdot)\left(\begin{array}{cc} H \\ & \bigH\end{array}\right) \left(\begin{array}{c} u^- \\ \bigu\end{array}\right)
	    +(\xi^{-\top}, \xi^{\top})\left(\begin{array}{cc} F \\ & \bigF\end{array}\right) \left(\begin{array}{c} u^- \\ \bigu\end{array}\right)
	    \\
	    &\mbox{s.t. } \left(\begin{array}{cc} G \\ & \bigG \end{array}\right)\left(\begin{array}{c} u^- \\ \bigu\end{array}\right)
	    \le \left(\begin{array}{cc} E \\ & \bigE \end{array}\right)\left(\begin{array}{c}\xi^- \\ \xi\end{array}\right)
	    +\left(\begin{array}{c} w \\ \bigw \end{array}\right)
	    \\
	    &\phantom{\mbox{s.t. }} \xi= A\xi^-+B u^-
	  \end{split}
	\end{equation*}
	and note any submatrix of the new $G$-matrix for an active set $\mathcal{A}$ has full rank if and only if $G_{\underline{\mathcal{A}}}$ and $G_{\mathcal{A}}$ have full rank. 
	After substituting $\xi= A\xi^- + Bu^-$ the constraints read
	\begin{equation*}
	  \begin{bmatrix}
	   G
	   \\
	   -\bigE B & \bigG
	  \end{bmatrix}
	  \begin{bmatrix}
	    u^- \\ \bigu
	  \end{bmatrix}
	  \le
	  \begin{bmatrix}
	    E
	    \\
	    \bigE A
	  \end{bmatrix}
	  \xi^-
	  +
	  \begin{bmatrix}
	    w
	    \\
	    \bigw
	  \end{bmatrix}
	\end{equation*}
	Now let $\a$ be an active set for~\eqref{eq:OptProblemBackwardInductionStep}, i.e., for the OCP with horizon $N+1$
	with the partition $\a= \a^- \underline{\a}$, 
	where $\a^-$ belongs to $G$, $E$ and $w$ and $\underline{\a}$ belongs to the remaining constraints. Then we have the following statements
	\begin{itemize}
	  \item Full row rank of 
	    \begin{equation}\label{eq:FullRowRankGHelper}
	      \begin{bmatrix}
		G\\
		-\bigE B & \bigG
	      \end{bmatrix}_{\a}
	      =
	      \begin{bmatrix}
	        G_{\a^-} \\
	        -\bigE_{\underline{\a}} B & \bigG_{\underline{\a}}
	      \end{bmatrix}
	    \end{equation}
	    does in general not imply full row rank of $\bigG_{\underline{\a}}$. 
	  \item \eqref{eq:FullRowRankGHelper} has full row rank iff both
	  \begin{equation*}
	    G_{\a^-}
	    \mbox{ and }
	    \begin{bmatrix}
	      -\bigE B & \bigG
	    \end{bmatrix}_{\underline{\a}}
	  \end{equation*}
	  have full row rank.
	\end{itemize}
	Remember Tondel2003-Automatica states that full dimensional polytopes with failing LICQ can be removed by a slight problem perturbation; 
	the authors cite on p.494 Berkelaar1997 for an example with an equality terminal constraint that results in a full dimensional polytope with LICQ faiure. 
} 

\subsection{Persistency of active sets, polytopes and optimal solutions}
\label{sec:Persistency}
Whenever a polytope and the optimal solution on it are defined by an active set for which all terminal constraints are inactive, this polytope and the optimal solution do not change when the horizon is extended. The following lemma and proposition state this more precisely. 
\begin{lem}\label{lem:PersistentActiveSets}
	Consider the optimal control problem~\eqref{eq:OCP} 
	and assume without restriction the forward constraint order~\eqref{eq:ForwardConstraintOrder} is used.
	Let $\tilde{\a}$ be an arbitrary index set with length $N(q_\X+ q_\U)$ and let $l\ge 0$ be arbitrary.
	The index set 
	\begin{equation}\label{eq:PersistentActiveSetsHelper1}
	  \tilde{\a}\underbrace{0\dots 0}_{q_\T}.
	\end{equation}
	is an active set for horizon $N$ if and only if the active set 
	\begin{equation}\label{eq:PersistentActiveSetsHelper2}
	  \tilde{\a} 
	  \underbrace{\underbrace{0\cdots 0}_{ q_\X+ q_\U}
	    \cdots
	    \underbrace{0\cdots 0}_{ q_\X+ q_\U}
	  }_{l} 
	  \underbrace{0\dots 0}_{q_\T}
	\end{equation}
	is an active set for horizon $N+l$. 
\end{lem}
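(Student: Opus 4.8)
The plan is to prove both implications by induction on $l$, using the single observation that, in the forward order~\eqref{eq:ForwardConstraintOrder}, the trailing $q_\T$ bits of an active set record whether $x^\star(N)\in\text{int}\,\T$, and that on $\text{int}\,\T$ the finite‑horizon optimum is continued by the infinite‑horizon LQR feedback $u=K_\infty x$. It suffices to treat $l=1$: the case $l=0$ is trivial, and the step $l\to l+1$ follows by applying the $l=1$ statement with $N$ replaced by $N+l$ and $\tilde{\a}$ replaced by $\tilde{\a}$ padded with $l$ zero stages. Throughout I rely on the positive definiteness of the Hessian of~\eqref{eq:parametricQP} (so all optimal control problems below, their one‑stage subproblems, and the unconstrained relaxations of the latter have unique minimizers) and on the assumptions on $P$, $K_\infty$ and $\T$ from Section~\ref{sec:ProblemStatement}. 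I argue at the level of ``there exists an $x(0)$ realising the active set'', so nothing changes if $\tilde{\a}$ defines a lower‑dimensional polytope, provided ``interior'' is read as ``relative interior'' where appropriate.

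For the forward implication ($N\Rightarrow N+1$): fix $x(0)$ whose horizon‑$N$ optimal solution has active set $\tilde{\a}\,0^{q_\T}$. Since the trailing $q_\T$ constraints are inactive, $x^\star(N)\in\text{int}\,\T$, so by~\eqref{eq:Lemma2.2Chmielewski1996} the extended sequences~\eqref{eq:InfiniteOptimalSequences} for horizons $N$ and $N+1$ coincide; hence the horizon‑$(N+1)$ optimum at $x(0)$ is $u^\star(0),\dots,u^\star(N-1),K_\infty x^\star(N)$ with states $x^\star(0),\dots,x^\star(N),(A+BK_\infty)x^\star(N)$. On stages $0,\dots,N-1$ the primal values are unchanged, so the active bits there are still $\tilde{\a}$. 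For the new stage $N$ I use $\text{int}\,\T\subseteq\text{int}\,\X$ and $K_\infty(\text{int}\,\T)\subseteq\text{int}\,\U$ (from $\T\subseteq\X$ and the definition of $\T$ through $\X_\U$ in~\eqref{eq:TerminalSet}) to get that the $q_\X+q_\U$ stage‑$N$ constraints are inactive, and positive invariance of $\text{int}\,\T$ under $A+BK_\infty$ to get $(A+BK_\infty)x^\star(N)\in\text{int}\,\T$, i.e. the $q_\T$ terminal constraints inactive. So the horizon‑$(N+1)$ active set is $\tilde{\a}\,0^{q_\X+q_\U}0^{q_\T}$.

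For the converse ($N+1\Rightarrow N$): fix $x(0)$ whose horizon‑$(N+1)$ optimal solution has active set $\tilde{\a}\,0^{q_\X+q_\U}0^{q_\T}$. By remark (i) of the notation section, $u^\star(N)$ solves~\eqref{eq:OCP} with horizon $1$ and initial state $x^\star(N)$, and in that subproblem all $q_\X+q_\U$ stage‑$N$ and $q_\T$ terminal constraints are inactive; being a strictly convex QP, its optimum lies in the interior and therefore coincides with the optimum of its unconstrained relaxation, which (because $P$ solves the DARE) is the LQR move, $u^\star(N)=K_\infty x^\star(N)$, $x^\star(N+1)=(A+BK_\infty)x^\star(N)$. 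From $x^\star(N)\in\text{int}\,\X$, $K_\infty x^\star(N)\in\text{int}\,\U$ and $x^\star(N+1)\in\text{int}\,\T$ together with~\eqref{eq:TerminalSet} a short perturbation argument gives $x^\star(N)\in\text{int}\,\T$: every $y$ near $x^\star(N)$ lies in $\X$ and in $\X_\U$, and $(A+BK_\infty)y$ lies near $x^\star(N+1)\in\text{int}\,\T$, hence in $\T$, so $y\in\T$. Finally, the truncation $u^\star(0),\dots,u^\star(N-1)$, $x^\star(0),\dots,x^\star(N)$ is horizon‑$N$ feasible at $x(0)$ (as $x^\star(N)\in\T$), and its cost equals the horizon‑$(N+1)$ optimal cost, because the discarded tail $\ell(x^\star(N),K_\infty x^\star(N))+\tfrac12\|x^\star(N+1)\|_P^2$ equals $\tfrac12\|x^\star(N)\|_P^2$ by the Bellman identity for the DARE; conversely, extending any horizon‑$N$ feasible solution by the LQR feedback gives a horizon‑$(N+1)$ feasible solution of the same cost, so the horizon‑$(N+1)$ optimal cost is at most the horizon‑$N$ optimal cost, while feasibility of the truncation gives the reverse inequality. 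Hence the truncation is horizon‑$N$ optimal, and by uniqueness it \emph{is} the horizon‑$N$ optimal solution at $x(0)$, whose active set is $\tilde{\a}\,0^{q_\T}$.

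The main obstacle is exactly the pair of topological facts about $\T$ invoked in the forward direction: that $\text{int}\,\T$ is positively invariant under $A+BK_\infty$, and that $\text{int}\,\T\subseteq\text{int}\,\X_\U$. These do not follow merely from ``$\T$ positively invariant and $0\in\text{int}\,\T$''; they amount to the statement that trajectories started in $\text{int}\,\T$ stay strictly inside both $\X$ and the input‑admissible region, which is also why the boundary case $x^\star(N)\in\partial\T$ must be excluded (cf. the footnote after~\eqref{eq:Lemma2.2Chmielewski1996} and Example~\ref{ex:PersistentActiveSets}). A secondary, purely bookkeeping point is keeping the interior/relative‑interior distinction consistent in~\eqref{eq:PersistentActiveSetsHelper1}--\eqref{eq:PersistentActiveSetsHelper2} when $\tilde{\a}$ defines a lower‑dimensional polytope; since every step is phrased through the existence of a realising $x(0)$ rather than through polytope dimensions, this causes no difficulty.
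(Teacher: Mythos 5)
Your proof is correct and follows the same skeleton as the paper's: reduce to $l=1$, prove the forward direction by combining \eqref{eq:Lemma2.2Chmielewski1996} with the interiority of the appended stage, and prove the converse by first establishing $u^\star(N)=K_\infty x^\star(N)$ and $x^\star(N)\in\text{int}\,\T$ via a ball/perturbation argument based on \eqref{eq:TerminalSet} (your argument for $x^\star(N)\in\text{int}\,\T$ is essentially identical to the paper's $\epsilon_1,\epsilon_2,\epsilon_3$ construction). You deviate from the paper in two places, both legitimately. First, in the converse you obtain $u^\star(N)=K_\infty x^\star(N)$ from the principle of optimality plus the fact that an interior optimum of the strictly convex one-stage QP coincides with its unconstrained (DARE/LQR) minimizer, and you then close the argument with an explicit truncation/extension cost comparison using the Bellman identity $\tfrac12\|x\|_P^2=\ell(x,K_\infty x)+\tfrac12\|(A+BK_\infty)x\|_P^2$; the paper instead infers $x^\star(N)\in\T$ from the maximality property defining $\T$ and then reuses \eqref{eq:Lemma2.2Chmielewski1996} to transfer optimality from horizon $N+1$ to $N$. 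Your route is more self-contained (it does not invoke \eqref{eq:Lemma2.2Chmielewski1996} a second time), at the price of carrying out the cost bookkeeping explicitly. Second, in the forward direction you assert rather than prove the two facts you correctly single out as the crux, namely $K_\infty(\text{int}\,\T)\subseteq\text{int}\,\U$ (equivalently $u^\star(N)\in\text{int}\,\U$) and the positive invariance of $\text{int}\,\T$ under $A+BK_\infty$; the paper supplies exactly these proofs, via the scaling argument $x^\star(N)\in\lambda\T$, $\lambda\in[0,1)$, together with $0\in\text{int}\,\U$, convexity, and Lemma~\ref{lem:PositiveInvarianceAndInterior}. Note also that your closing caveat is too pessimistic: these facts \emph{do} follow from positive invariance of $\T$ once compactness, convexity and $0\in\text{int}\,\T$, $0\in\text{int}\,\U$ are used, which is precisely how the paper closes them; so the omission is a small, fillable gap rather than a flaw in the approach.
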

\begin{proof}
	It suffices to prove the claim for $l= 1$, since the cases $l>1$ follow by induction.  
	To show an active set~\eqref{eq:PersistentActiveSetsHelper1} implies the existence of~\eqref{eq:PersistentActiveSetsHelper2}, 
	let $x(0)\in\F_N$ be an arbitrary initial condition that results in~\eqref{eq:PersistentActiveSetsHelper1}.
	Since~\eqref{eq:PersistentActiveSetsHelper1} implies inactivity of the terminal constraints, we have 
	\begin{equation}\label{eq:PersistentActiveSetsHelper25}
	  x^\star(N)\in\text{int}\,\mathcal{T}.
	\end{equation}
	Consequently, \eqref{eq:Lemma2.2Chmielewski1996} applies and the infinite-horizon problem and the finite-horizon problem for horizons $N$, $N+1$ have the same optimal solution, which we denote	
%
	\begin{equation}\label{eq:PersistentActiveSetsHelper24}
	  u^\star(k), \, x^\star(k), \, k\ge 0.
	\end{equation}
    It remains to prove the active set for $x(0)$ and horizon $N+1$ has the form~\eqref{eq:PersistentActiveSetsHelper2} for this solution,
    which can be done by showing $x^\star(N)\in\text{int}\,\X$, $u^\star(N)\in\text{int}\,\U$ and $x^\star(N+1)\in\text{int}\,\T$.
	It is easy to show that~\eqref{eq:PersistentActiveSetsHelper25} implies 
	\begin{equation}\label{eq:PersistentActiveSetsHelper19} 
	  x^\star(N+l)\in\text{int}\,\T\text{ for all }l\ge 0 
	\end{equation}
	due to the positive invariance of $\T$ (see Lemma~\ref{lem:PositiveInvarianceAndInterior} in the appendix). 
	This in particular implies 
   \begin{equation}\label{eq:PersistentActiveSetsHelper20}
     x^\star(N+1)\in\text{int}\,\T .
   \end{equation}	
	Since $x^\star(N)\in\text{int}\,\T$, and since $\text{int}\,\T\subseteq\text{int}\,\X$ follows from
	$\T\subseteq\X$, we also have
	\begin{equation}\label{eq:PersistentActiveSetsHelper21}
	  x^\star(N)\in\text{int}\,\X.
	\end{equation}
	\REMARK{$\lambda$ is used both as multiplier and scaling factor:}
    The proof for $u^\star(N)\in\text{int}\,\U$ is somewhat technical.
    It is easy to show that there exists a $\lambda\in[0, 1)$ such that $x^\star(N)\in\lambda\T:= \{\lambda \xi|\xi\in\T\}$ and $\lambda\T\subset\text{int}\,\T$, because $x^\star(N)\in\text{int}\,\T$, $0\in\text{int}\,\T$~\cite{Sznaier1987} and $\T$ is convex~\cite{Gutman1987}.
    \REMARK{Alternatively, let $\lambda= \psi_{\T}(x^\star(N))$, where $\psi_{\T}$ is the Minkowski function for $\T$, and $\T$ and $\lambda\T$ are two sublevel sets of $\psi_{\T}$ with $\lambda\T\subset\text{int}\,\T$ (see, e.g., \cite[chapter 3.1.2]{Blanchini}).}   
    Now $x^\star(N)\in\lambda\T$ implies there exists a $\xi\in\T$ such that $x^\star(N)= \lambda\xi$. 
    By definition of $\T$, $\xi\in\T$ implies $\xi\in\X_{\U}$ and by definition of $\X_{\U}$ we have  $K_\infty \xi\in\U$. 
    From
    \begin{equation*}
      \frac{1}{\lambda}K_\infty x= K_\infty \frac{1}{\lambda}\lambda\xi= K_\infty\xi\in\U
    \end{equation*}
    we infer $K_\infty x^\star(N)\in\lambda\U$. 
    Together with $0\in\text{int}\,\U$ and the convexity of $\U$ this implies $K_\infty x^\star(N)\in\text{int}\,\U$.
    Therefore
	\begin{equation}\label{eq:PersistentActiveSetsHelper22}
	  u^\star(N)= K_\infty x^\star(N)\in\text{int}\,\U.
	\end{equation}

To show an active set~\eqref{eq:PersistentActiveSetsHelper2} implies the existence of~\eqref{eq:PersistentActiveSetsHelper1}, 
let $l= 1$,
let $x(0)$ be an arbitrary initial condition that results in~\eqref{eq:PersistentActiveSetsHelper2},
and let~\eqref{eq:PersistentActiveSetsHelper24} denote the optimal solution for horizon $N+1$, which now needs to be shown to be optimal for $N$.
The trailing zeros in~\eqref{eq:PersistentActiveSetsHelper2} imply the constraints in stages $N$ and $N+1$ are inactive,
i.e.,
\begin{equation}\label{eq:PersistentPolytopesHelper20}
  \begin{split}
	  x^\star(N)\in\text{int}\,\X, \,
	  u^\star(N)\in\text{int}\,\U,
	  \\
	  x^\star(N+1)\in\text{int}\,\T\subseteq\text{int}\,\X.
  \end{split}
\end{equation} 
By the same arguments as for~\eqref{eq:PersistentActiveSetsHelper19} we have
\begin{equation}\label{eq:PersistentPolytopesHelper21}
  x^\star(N+l)\in\text{int}\,\T\subseteq\text{int}\,\X\text{ for all }l\ge 0.
\end{equation}
Together~\eqref{eq:PersistentPolytopesHelper20} and~\eqref{eq:PersistentPolytopesHelper21} imply that~\eqref{eq:OCP} for horizon $N=1$ and initial condition $x^\star(N)$ has the same solution as the unconstrained infinite-horizon problem.
Consequently, 
$x^\star(N)\in\T$, since $\T$ is the largest set of initial conditions such that \eqref{eq:OCP} and the unconstrained problem result in the same solution.
\REMARK{
		The OCP~\eqref{eq:OCP} for horizon $N= 1$ and the unconstrained problem result in the same value of the cost function for initial condition $x^\star(N)$, i.e., 
		\begin{equation}\label{eq:PersistentPolytopesHelper1}
		  \begin{split}
		  &\|x^\star(N)\|_P^2 \\
		  =& \|x^\star(N+1)\|_P^2+ \left(\|x^\star(N)\|_Q^2+ \|u^\star(N)\|_R^2\right). 
		  \end{split}
		\end{equation}
		\TODO{State in the introduction that the optimal cost is $\half \|x\|_P^2$ for the unconstrained problem.}
		Statement (i) implies $x(0)$ is feasible for the OCP with horizon $N$. 
		Statement (ii) implies the input sequence and trajectory~\eqref{eq:PersistentActiveSetsHelper24}
		are optimal for the OCP with horizon $N$, which can be shown by contradiction: 
		Assume~\eqref{eq:PersistentActiveSetsHelper24}
		is feasible as in (i) but not optimal for the initial condition $x(0)$, horizon $N$ and active set~\eqref{eq:PersistentActiveSetsHelper1}. 
		Let $\tilde{u}(k)$, $\tilde{x}(k)$,  $k\ge 0$ refer to the optimal solution and recall this solution is unique, then 
		\begin{subequations}\label{eq:PersistentPoltyopesHelper10}
		  \begin{align}
			  &\|\tilde{x}(N)\|_P^2+ \sum_{k=0}^{N-1} \left(\|\tilde{x}(k)\|_Q^2+ \|\tilde{u}(k)\|_R^2\right)
			  \nonumber
			  \\
			  < &
			  \|x^\star(N)\|_P^2+ \sum_{k=0}^{N-1} \left(\|x^\star(k)\|_Q^2+ \|u^\star(k)\|_R^2\right)
			  \label{eq:PersistentPoltyopesHelper10a}
			  \\
			  = &
			  \|x^\star(N+1)\|_P^2+ \sum_{k=0}^N \left(\|x^\star(k)\|_Q^2+ \|u^\star(k)\|_R^2\right)
			  \label{eq:PersistentPoltyopesHelper10b}
			\end{align}
		\end{subequations}
		where~\eqref{eq:PersistentPoltyopesHelper10a} holds, because $\tilde{x}(\cdot)$, $\tilde{u}(\cdot)$ and $x^\star(\cdot)$, $u^\star(\cdot)$ are uniquely optimal and not optimal, respectively, 
		and~\eqref{eq:PersistentPoltyopesHelper10b} follows by substituting~\eqref{eq:PersistentPolytopesHelper1}. 
		Now since $\tilde{x}(\cdot)$ and $\tilde{u}(\cdot)$ are optimal with active set~\eqref{eq:PersistentActiveSetsHelper1} by assumption, $\tilde{x}(N)\in\text{int}\,\T$. 
		By the same arguments that yield~\eqref{eq:PersistentActiveSetsHelper20}, the optimal successor state $\tilde{x}(N+1)= (A+ BK_\infty)\tilde{x}(N)$ lies in the interior of $\T$ and, by the same arguments that yield~\eqref{eq:PersistentPolytopesHelper1}
		\begin{equation}\label{eq:PersistentPolytopesHelper3}
		    \begin{split}
		  &\|\tilde{x}(N)\|_P^2 \\
		  =& \|\tilde{x}(N+1)\|_P^2+ \left(\|\tilde{x}(N)\|_Q^2+ \|\tilde{x}(N)\|_R^2\right). 
		  \end{split}
		\end{equation}
		Combining this with~\eqref{eq:PersistentPoltyopesHelper10} yields
		\begin{equation}\label{eq:PersistentPoltyopesHelper11}
		  \begin{split}
			  &\|\tilde{x}(N+1)\|_P^2+ \sum_{k=0}^{N} \left(\|\tilde{x}(k)\|_Q^2+ \|\tilde{u}(k)\|_R^2\right)
			  \\
			  < &
			  \|x^\star(N+1)\|_P^2+ \sum_{k=0}^N \left(\|x^\star(k)\|_Q^2+ \|u^\star(k)\|_R^2\right),
			\end{split}
		\end{equation}
		which is a contradiction, since $x^\star(k)$, $k= 1, \dots, N$ and $u^\star(k)$, $k= 0, \dots, N-1$ are optimal for $N+1$ and~\eqref{eq:PersistentActiveSetsHelper2}. 
} 
\REMARK{"feasible" appears here.}
Since $x^\star(N)\in\T$, the input sequence and trajectory~\eqref{eq:PersistentActiveSetsHelper24} with initial condition $x(0)$ are feasible for horizon $N$
and $u^\star(N)= K_\infty x^\star(N)$. 

We need to show $x^\star(N)\in\text{int}\,\mathcal{T}$ to complete the proof, which can be done by showing the existence of an open ball centered at $x^\star(N)$ in $\mathcal{T}$. This requires three preparations.
First note that $x^\star(N+1)\in\text{int}\,\T$ implies there exists an $\epsilon_1>0$ such that 
\begin{equation}\label{eq:PersistentPolytopesHelper39}
  B_{\epsilon_1}(x^\star(N+1))\subset\T. 
\end{equation}
By the definition of $\T$ in~\eqref{eq:TerminalSet} we have
\begin{equation}\label{eq:PersistentPolytopesHelper40}
  (A+BK_\infty)^k \xi\in\X_\U \,\forall\, k\ge 0, \, \xi\in B_{\epsilon_1}(x^\star(N+1))
\end{equation}
Secondly, $u^\star(N)\in\text{int}\,\U$ and $x^\star(N)\in\text{int}\,\X$ imply
there exist $\epsilon_2>0$, $\epsilon_3>0$ such that $B_{\epsilon_2}(u^\star(N))\subset \U$ and
$B_{\epsilon_3}(x^\star(N))\subset\X$. 
Moreover, $\epsilon_3$ can be chosen sufficiently small to ensure 
$K_\infty B_{\epsilon_3}(x^\star(N))\subset B_{\epsilon_2}(u^\star(N))\subset\U$, because $x\rightarrow K_\infty x$ is linear. 
\REMARK{Check which property is actually necessary here instead of linearity.}
Together $B_{\epsilon_3}(x^\star(N))\subset\X$ and $K_\infty B_{\epsilon_3}(u^\star(N))\subset\U$ yield
\begin{equation}\label{eq:PersistentPolytopesHelper41}
  B_{\epsilon_3}(x^\star(N))\subset\X_\U
\end{equation}
by definition of $\X_\U$ (see~\eqref{eq:TerminalSet}). 
As a third preparation, note that $\epsilon_3$ can be chosen sufficiently small to ensure
\begin{equation}\label{eq:PersistentPolytopesHelper42}
  (A+BK_\infty) B_{\epsilon_3}(x^\star(N))\subset B_{\epsilon_1}(x^\star(N+1)),
\end{equation}
because $x\rightarrow(A+BK_\infty)x$ is linear. 
By collecting intermediate results we find, for all $\xi\in B_{\epsilon_3}(x^\star(N))$,
\begin{equation*}
  \begin{split}
    (A+BK_\infty)^0\xi&= \xi\in\X_\U \text{ (acc. to~\eqref{eq:PersistentPolytopesHelper41})}
    \\
    (A+BK_\infty)^1\xi&\in \X_\U
    \text{ (acc. to~\eqref{eq:PersistentPolytopesHelper42}, \eqref{eq:PersistentPolytopesHelper40})}
    \\
    (A+BK_\infty)^l(A+BK_\infty)\xi&\in\X_\U\,\forall l\ge 0
    \text{ (acc. to~\eqref{eq:PersistentPolytopesHelper42}, \eqref{eq:PersistentPolytopesHelper40})}
  \end{split}
\end{equation*}
Together these three statements imply $B_{\epsilon_3}(x^\star(N))\subset\T$ by definition of $\T$ in~\eqref{eq:TerminalSet}, or equivalently, 
$x^\star(N)\in\text{int}\,\T$. This proves $x(0)$ results in the active set~\eqref{eq:PersistentActiveSetsHelper1} for horizon $N$. Furthermore, $x^\star(N)\in\text{int}\,\T$ implies that \eqref{eq:Lemma2.2Chmielewski1996} applies. Consequently, \eqref{eq:PersistentActiveSetsHelper24} is also the optimal solution for horizon $N$. 
\end{proof}
\REMARK{
The converse of Lemma~\ref{lem:PersistentActiveSets} is probably not true, but a weaker converse exists. 
The converse is probably not true, because~\eqref{eq:PersistentActiveSetsHelper2} implies $x^\star(N)\in\T$ as shown in the following remark, but in general it will probably not hold that $x^\star(N)\in\text{int}\,\T$. Try to show this by looking for an active set for $N+1$
\begin{equation*}
  \tilde{\a}0\cdots 0 0\cdots 0
\end{equation*}
such that a set for $N$ of the form~\eqref{eq:PersistenceAndConvexityHelper1} does not exist, but a set of the form
\begin{equation*}
  \tilde{a} X\cdots X,
\end{equation*}
where $X\cdots X$ may contain active constraints, does exist.
\begin{rem}
	To show the partial converse, let $x(0)\in\mathcal{F}_{N+1}$ be an arbitrary initial condition for horizon $N+1$ that results in an active set~\eqref{eq:PersistentActiveSetsHelper2} and let~\eqref{eq:PersistentActiveSetsHelper24} denote the resulting optimal input sequence and trajectory. 
	The trailing zeroes in~\eqref{eq:PersistentActiveSetsHelper1} imply 
	$x^\star(N)\in\text{int}\,\X$, $u^\star(N)\in\text{int}\,\U$ and $x^\star(N+1)\in\text{int}\,\T$.
	Consequently, solving \eqref{eq:OCP} for $N= 1$ and initial condition $x^\star(N)$ results in the same solution as the unconstrained problem. 
	This implies $x^\star(N)\in\T$, since $\T$ is the largest set of initial conditions for which the constrained and inconstrained problem yield the same solution.
	\TODO{It remains to prove that $x(N)$ is in the interior of $\T$, not just in $\T$:}
	\begin{itemize}
		\item We know $x(N)\in\T$ and $(A+BK_\infty)x(N)= x(N+1)\in\text{int}\,\T$. 
		\item Since $\text{int}\,\T$ is open and $(A+BK_\infty)$ is continuous, the preimage of $\text{int}\,\T$, say $\text{pre}((A+BK_\infty), \text{int}\,\T)$, under this map is open, i.e., an open subset of $\R^n$.
		\item $x(N)\in\text{pre}((A+BK_\infty), \text{int}\,\T)$. 
	\end{itemize}
	\end{rem}}
\REMARK{\begin{rem}
Since the proof of Proposition~\ref{prop:ActiveSetExtension} is carried out with the optimality conditions, it is obvious to ask whether the proof of Lemma~\ref{lem:PersistentActiveSets} can be carried out in the same manner. This would have to result in expressions for two polytopes that turn out to be identical. To this end, the cost function of the OCP with horizons $N$ and $N+1$ have to be shown to be identical for $x(0)$ such that $x(N)\in\T$. See \textit{Notes.tex}. 
\end{rem}} 

It remains to show that the active sets~\eqref{eq:PersistentActiveSetsHelper1} and~\eqref{eq:PersistentActiveSetsHelper2} define the same polytope and the same optimal solution on it. We call polytopes with this property persistent.
\REMARK{Relate 'polytope $\P$' in the definition to a pwa control law and its polytopes. This way, it is stated, implicitly at least, that 'polytope' refers to a single polytope, i.e., a polytope that is defined by a single active set and not a union of two or more such polytopes that maybe turns out to be a convex union by chance.}
\REMARK{Think about moving this definition further up front, around the first figure.}
\begin{defn}\label{def:PersistentActiveSets}
A Polytope $\mathcal{P}$ and the optimal solution on it are called \textit{persistent from horizon $N$ on}, if the polytope exists for all horizons $N+l$, $l\ge0$ and the optimal solution for the optimal control problem~\eqref{eq:OCP} remains the same for all initial conditions $x(0)\in\mathcal{P}$ and all $l\ge0$. 
\end{defn}
We omit "from horizon $N$ on" when $N$ is obvious. Note that $N$ is not necessarily the smallest possible one in the definition. 
The term "persistent active set" refers to any active set that defines a persistent polytope.  
\REMARK{We must distinguish the set of all persistent polytopes from the set of persistent polytopes with active sets of the form~\eqref{eq:PersistentActiveSetsHelper1}.}
\REMARK{
Prove the converse of Cor.~\ref{cor:SetOfPersistentPolytopes} or prove it does not hold. (Is every persistent polytope defined by an active set of the form~\eqref{eq:PersistentActiveSetsHelper19}?)
}
\begin{prop}\label{prop:PersistentPolytopes}
Assume the conditions stated in Lemma~\ref{lem:PersistentActiveSets} to hold. Then the polytope $\P$ and solution on it defined 
by the active set~\eqref{eq:PersistentActiveSetsHelper1} are persistent from horizon $N$ on. 
Furthermore, for any horizon $N+l$, $l\ge 0$ and the optimal solution on it are defined by the active set~\eqref{eq:PersistentActiveSetsHelper2}.
\end{prop}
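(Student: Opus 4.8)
The plan is to deduce the statement from the proof of Lemma~\ref{lem:PersistentActiveSets} with little extra work; as in that lemma it suffices to handle $l=1$ and then iterate on $l$. Lemma~\ref{lem:PersistentActiveSets} already guarantees that~\eqref{eq:PersistentActiveSetsHelper2} (with $l=1$) is an active set for horizon $N+1$, hence it defines a polytope $\P'$ in the partition of $\mathcal{F}_{N+1}$ together with an affine optimal law on it; write $\P$ for the horizon-$N$ polytope defined by~\eqref{eq:PersistentActiveSetsHelper1}. What remains for Definition~\ref{def:PersistentActiveSets} is (a) $\P'=\P$ as subsets of $\R^n$, and (b) that the optimal solution on this common set does not change as the horizon grows.

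For (a) I would reread the proof of Lemma~\ref{lem:PersistentActiveSets} not as an existence statement but as a pointwise correspondence between initial conditions: that proof takes an \emph{arbitrary} $x(0)$ whose horizon-$N$ active set is~\eqref{eq:PersistentActiveSetsHelper1}, and shows that the infinite-horizon optimizer~\eqref{eq:PersistentActiveSetsHelper24} is also the horizon-$(N+1)$ optimizer for that same $x(0)$ and produces there precisely the active set~\eqref{eq:PersistentActiveSetsHelper2}; conversely it takes an arbitrary $x(0)$ whose horizon-$(N+1)$ active set is~\eqref{eq:PersistentActiveSetsHelper2} and shows that the same sequences are optimal for horizon $N$ with $x^\star(N)\in\text{int}\,\T$, so that $x(0)$ has horizon-$N$ active set~\eqref{eq:PersistentActiveSetsHelper1}. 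Hence the set of initial states with horizon-$N$ active set~\eqref{eq:PersistentActiveSetsHelper1} and the set with horizon-$(N+1)$ active set~\eqref{eq:PersistentActiveSetsHelper2} coincide; since $\P$ and $\P'$ are — up to the relative-interior convention of the preliminaries — precisely these two sets of initial states, we get $\P=\P'$. Iterating the $l=1$ case then shows that $\P$ equals the polytope defined by~\eqref{eq:PersistentActiveSetsHelper2} for every horizon $N+l$, which in particular establishes that $\P$ persists as a polytope.

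For (b), on $\P$ the proof of Lemma~\ref{lem:PersistentActiveSets} established $x^\star(N)\in\text{int}\,\T$, so~\eqref{eq:Lemma2.2Chmielewski1996} applies: the extended optimal sequences~\eqref{eq:InfiniteOptimalSequences} agree for horizons $N$, $N+1$ and, by the same induction, for every $N+l$, since in each case they coincide with the unconstrained infinite-horizon solution started at $x(0)$. Thus the optimal solution on $\P$ — read, as throughout the paper, with the continuation $u^\star(k)=K_\infty x^\star(k)$, $k\ge N$, of~\eqref{eq:InfiniteOptimalSequences} — is horizon-independent, and its defining active set for horizon $N+l$ is~\eqref{eq:PersistentActiveSetsHelper2} by construction. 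This is precisely persistence in the sense of Definition~\ref{def:PersistentActiveSets}.

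The main obstacle I anticipate is expository rather than mathematical: Lemma~\ref{lem:PersistentActiveSets} is phrased as ``$\a$ is an active set'', so one has to be careful to quote from its proof the sharper fact that the \emph{same} $x(0)$, with the \emph{same} optimizer~\eqref{eq:PersistentActiveSetsHelper24}, realizes~\eqref{eq:PersistentActiveSetsHelper1} for horizon $N$ exactly when it realizes~\eqref{eq:PersistentActiveSetsHelper2} for horizon $N+1$, and only then identify the corresponding regions. A secondary subtlety is to pin down what ``the optimal solution'' in Definition~\ref{def:PersistentActiveSets} means, since the finite input sequences have different lengths for different horizons; the genuinely invariant object is the extension~\eqref{eq:InfiniteOptimalSequences}. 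No rank or constraint-qualification technicalities intervene here, because $\P(\a)$ is defined via solvability of the KKT system~\eqref{eq:KKTparametricQP} rather than via an explicit halfspace representation.
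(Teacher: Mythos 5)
Your proposal is correct and follows essentially the same route as the paper's own proof: both inclusions $\P_N\subseteq\P_{N+l}$ and $\P_{N+l}\subseteq\P_N$ are read off from the two directions of the proof of Lemma~\ref{lem:PersistentActiveSets} as a pointwise correspondence of initial conditions, and the horizon-invariance of the optimal solution on the common polytope comes from the same lemma via~\eqref{eq:Lemma2.2Chmielewski1996}. The only cosmetic difference is that you reduce to $l=1$ and iterate, whereas the paper invokes the lemma for arbitrary $l$ directly.
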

\begin{proof}
Let $l\ge0$ be arbitrary and let $\mathcal{P}_N$ and $\mathcal{P}_{N+l}$ refer to the polytopes defined by the active sets~\eqref{eq:PersistentActiveSetsHelper1} and~\eqref{eq:PersistentActiveSetsHelper2}, respectively. 
Any $x(0)$ with active set~\eqref{eq:PersistentActiveSetsHelper1} for horizon $N$ results in the active set~\eqref{eq:PersistentActiveSetsHelper2} for horizon $N+l$ according to the first part of the proof of Lemma~\ref{lem:PersistentActiveSets}. 
This implies $\mathcal{P}_{N}\subseteq\mathcal{P}_{N+l}$. 
Analogously, the second part of the proof implies $\mathcal{P}_{N+l}\subseteq\mathcal{P}_N$. 
The proof of Lemma~\ref{lem:PersistentActiveSets} also established the equality of the input sequences and optimal trajectories~\eqref{eq:PersistentActiveSetsHelper24} for all  $x(0)\in\P_N= \P_{N+l}$. 
\end{proof}
The following example illustrates Lemma~\ref{lem:PersistentActiveSets} and Proposition~\ref{prop:PersistentPolytopes}. 
\begin{figure}
  \centering
  \scriptsize{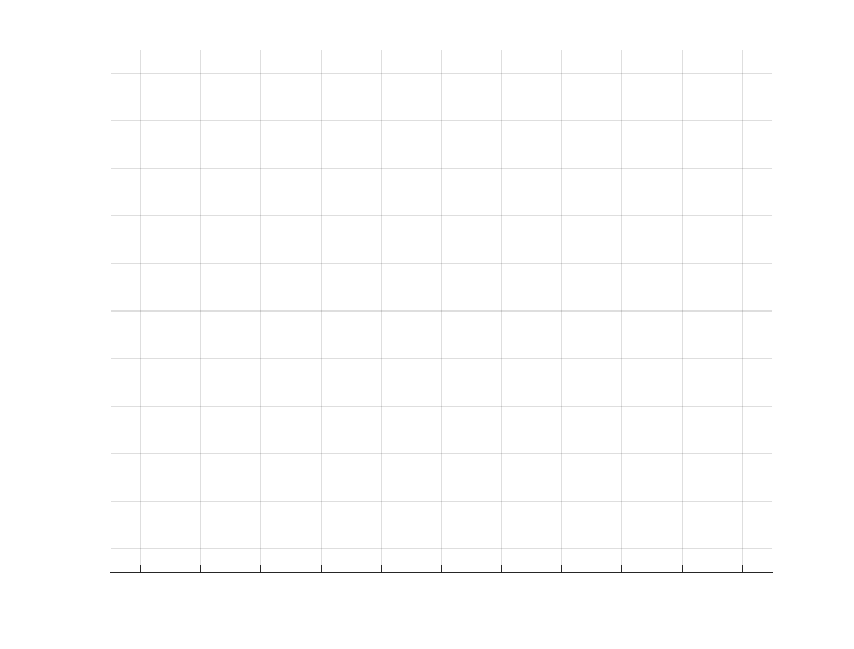}
  \caption{Illustrations for Example~\ref{ex:PersistentActiveSets}. 
  The one-dimensional yellow polytope (width increased for better visibility only) marks $x^\star(1)$ for $N=1$ and all $x(0)$ in the upper red polytope. }
  \label{fig:PersistentActiveSets}
\end{figure} 
\begin{example}\label{ex:PersistentActiveSets}
Consider the same problem as in Example~\ref{ex:ActiveSetExtension}. 
The white polytopes in Figure~\ref{fig:PersistentActiveSets} are defined by the active sets
\begin{equation*}
  \begin{tabular}{rcr}
  \multicolumn{1}{c}{$N=1$} & & \multicolumn{1}{c}{$N=2$}
  \\\hline
   $000000.0000$ &&  $000000.000000.0000$ \\ 
   $100000.0000$ &&  $100000.000000.0000$ \\ 
   $010000.0000$ &&  $010000.000000.0000$ 
  \end{tabular}
\end{equation*}
which are persistent polytopes from $N=1$ and $N= 2$ on, respectively, according to Proposition~\ref{prop:PersistentPolytopes}.

The two red polytopes for $N= 1$ shown in Figure~\ref{fig:PersistentActiveSets} result for the active sets
\begin{equation*}
  \begin{tabular}{r}
    $000000.0001$\\
    $000000.0010$
  \end{tabular} 
\end{equation*}
which do not respect the conditions of Proposition~\ref{prop:ActiveSetExtension}. 
Figure~\ref{fig:PersistentActiveSets} shows that they are not persistent but disappear for $N= 2$.  
 
The light grey polytopes in Figure~\ref{fig:PersistentActiveSets} are defined by the active sets
\begin{equation}\label{eq:Example2ActiveSetsPersistentFrom2On}
  \begin{tabular}{r}
    $000000.100000.0000$ \\ 
    $100000.100000.0000$ \\ 
    $010000.100000.0000$ \\ 
    $000000.010000.0000$ \\ 
    $100000.010000.0000$ \\ 
    $010000.010000.0000$ \\ 
  \end{tabular}
\end{equation}
for $N= 2$. These polytopes are persistent from $N= 2$ on according to Proposition~\ref{prop:PersistentPolytopes}.

The dark grey polytopes in Figure~\ref{fig:PersistentActiveSets} are defined by the active sets
\begin{equation}\label{eq:Example2ActiveSetsNotPersistent}
  \begin{tabular}{r}
    $100000.000000.0001$ \\ 
    $010000.000000.0010$ \\ 
    $000000.010000.0010$ \\ 
    $000000.100000.0001$ 
  \end{tabular}
\end{equation}
for $N= 2$. An analysis of the example for $N= 3$ (not detailed here) shows that the polytopes defined by~\eqref{eq:Example2ActiveSetsPersistentFrom2On} and~\eqref{eq:Example2ActiveSetsNotPersistent} are indeed persistent from $N= 2$ on and not persistent, respectively. 

Finally, Figure~\ref{fig:PersistentActiveSets} shows the solution to~\eqref{eq:OCP} 
for a sample initial condition for $N= 1$ and $N= 2$. 
The two trajectories illustrate that optimal trajectories change if the horizon is increased for initial conditions from non-persistent polytopes (cp.\ remark (iii) and the subsequent discussion in the notation section). 
Moreover, $x^\star(N)\in\partial\T$ results for $N=1$, and since the trajectories for $N= 1$ and $N=2$ differ, this shows $x^\star(N)\in\T$ is not sufficient in~\eqref{eq:Lemma2.2Chmielewski1996} and Lemma~\ref{lem:PersistentActiveSets}.
\end{example}
\REMARK{Proposition~\ref{prop:PersistentPolytopes} requires $\a_N$ to end with $q_\T$ zeroes. This implies $x^\star(N)\in\text{int}\,\T$ (see~\eqref{eq:PersistentActiveSetsHelper25}). The following example shows $x^\star(N)\in\T$ is in general not sufficient, 
which essentially results, because $x\in\text{int}\,\T$ can in general not be replaced by $x\in\T$ in~\eqref{eq:Lemma2.2Chmielewski1996}.}
Lemma~\ref{lem:PersistentActiveSets} and Proposition~\ref{prop:PersistentPolytopes} essentially result from inserting zeroes, i.e., stages with inactive constraints, "on the right" of an active set of the form~\eqref{eq:PersistentActiveSetsHelper1}. Just as in Corollary~\ref{cor:ActiveSetsShrinkingHorizon}, we can also remove stages "on the left" to obtain new active sets. In general this results in active sets for a \textit{shrinking} horizon as in Corollary~\ref{cor:ActiveSetsShrinkingHorizon}. For persistent active sets~\eqref{eq:PersistentActiveSetsHelper1}, new active sets for the \textit{same} horizon result, and they are persistent themselves:  
\REMARK{The following Cor. (and possibly subsequent statements) should more naturally include the case $l= N$.}
\begin{cor}\label{cor:PersistentPolytopesShrinkingHorizon}
  Assume $\P$ is a polytope that is persistent from horizon $N$ on with an 
  active set $\a_N$ of the form~\eqref{eq:PersistentActiveSetsHelper1}. 
  Let $\a_{N,i}$ be tuples of length $q_\X+ q_\U$ such that 
  \begin{align}\label{eq:SequenceOfPersistentPolytopesHelper1}
		  \a_N= \a_{N,0} \cdots \a_{N,N-1}\underbrace{0\cdots 0}_{q_\T}.
  \end{align}
  Then the polytope defined by 
  \begin{align}\label{eq:SequenceOfPersistentPolytopesHelper2}
    \a_{N-l}= \a_{N, l} \cdots \a_{N,N-1}\underbrace{0\cdots 0}_{q_\T}
  \end{align}
  is persistent from $N-l$ on for any $l\in\{0, \dots, N-1\}$. 
  Moreover, for any such $l$, the polytope defined by~\eqref{eq:SequenceOfPersistentPolytopesHelper2} for horizon $N-l$ 
  is defined by
  \begin{align}\label{eq:SequenceOfPersistentPolytopesHelper3}
    \tilde{\a}_{N}= \a_{N, l} \cdots \a_{N,N-1}
    \underbrace{0\dots0}_{l\cdot(q_\X+ q_\U)}
    \underbrace{0\cdots 0}_{q_\T}
  \end{align}   
  for horizon $N$ and persistent from horizon $N$ on. 
\end{cor}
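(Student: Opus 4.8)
The plan is to combine Corollary~\ref{cor:ActiveSetsShrinkingHorizon}, which removes stages ``on the left'' of an active set, with Proposition~\ref{prop:PersistentPolytopes}, which pads with inactive stages ``on the right'', the latter applied on the shorter horizon $N-l$. It suffices to treat an arbitrary but fixed $l\in\{0,\dots,N-1\}$; for $l=0$ there is nothing to prove, since then $\a_{N-l}=\tilde\a_N=\a_N$.

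First I would note that, because $\P$ is persistent from horizon $N$ on, the active set $\a_N$ of the form~\eqref{eq:PersistentActiveSetsHelper1} is in particular an active set for horizon $N$; pick any initial condition $x(0)$ realizing it and let $x^\star(k)$, $k\ge0$, be the associated optimal sequence~\eqref{eq:InfiniteOptimalSequences}. Partitioning $\a_N$ stage by stage as in~\eqref{eq:SequenceOfPersistentPolytopesHelper1} and applying Corollary~\ref{cor:ActiveSetsShrinkingHorizon}, the left-truncation $\a_{N-l}$ of~\eqref{eq:SequenceOfPersistentPolytopesHelper2} is the active set for horizon $N-l$ and initial condition $x^\star(l)$. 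The point to observe is that removing stages on the left leaves the trailing block of $q_\T$ zeros intact, so $\a_{N-l}$ again has the shape~\eqref{eq:PersistentActiveSetsHelper1}, now with $N$ replaced by $N-l$ and with $\a_{N,l}\cdots\a_{N,N-1}$ (a tuple of length $(N-l)(q_\X+q_\U)$) in the role of $\tilde\a$.

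Next I would invoke Proposition~\ref{prop:PersistentPolytopes} with $N$ replaced by $N-l$: the polytope and the optimal solution on it defined by $\a_{N-l}$ are persistent from horizon $N-l$ on, which is the first assertion. The ``furthermore'' part of that proposition states that, for any horizon $(N-l)+l'$ with $l'\ge0$, the \emph{same} polytope is defined by the active set obtained from $\a_{N-l}$ by inserting $l'$ blocks of $q_\X+q_\U$ zeros immediately before the terminal block; taking $l'=l$, hence horizon $(N-l)+l=N$, this active set is exactly $\tilde\a_N$ of~\eqref{eq:SequenceOfPersistentPolytopesHelper3}. Finally, persistence from horizon $N-l$ on trivially entails persistence from horizon $N$ on, since $N\ge N-l$, which completes the proof.

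No step here is genuinely difficult: the corollary is essentially bookkeeping on top of Corollary~\ref{cor:ActiveSetsShrinkingHorizon} and Proposition~\ref{prop:PersistentPolytopes}, provided the stage-by-stage order~\eqref{eq:ForwardConstraintOrder} is fixed throughout. The only things to be careful about are checking that the hypotheses of Corollary~\ref{cor:ActiveSetsShrinkingHorizon} hold --- which amounts to using persistence of $\P$ to guarantee that $\a_N$ is an active set for horizon $N$ --- and, when padding $\a_{N-l}$ back up to horizon $N$, making sure the inserted zeros land immediately before the terminal block so that the result is precisely $\tilde\a_N$ and not a different completion of $\a_{N-l}$.
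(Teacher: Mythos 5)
Your proposal is correct and follows essentially the same route as the paper: truncate on the left via Corollary~\ref{cor:ActiveSetsShrinkingHorizon}, note the trailing $q_\T$ zeros are preserved so Proposition~\ref{prop:PersistentPolytopes} applies with $N$ replaced by $N-l$, and conclude persistence from $N-l$ (hence from $N$) on. The only cosmetic difference is that you obtain the claim about~\eqref{eq:SequenceOfPersistentPolytopesHelper3} from the ``furthermore'' clause of Proposition~\ref{prop:PersistentPolytopes} with $l'=l$, whereas the paper cites Lemma~\ref{lem:PersistentActiveSets} applied to~\eqref{eq:SequenceOfPersistentPolytopesHelper2}; these amount to the same argument.
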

\REMARK{It needs to be clearer that there result $N+1$ new persistent polytopes in the previous Corollary.}
\begin{proof}
  Let $x(0)\in\P$ be arbitrary and let $u^\star(k)$, $x^\star(k)$, $k\ge 0$ be as in~\eqref{eq:InfiniteOptimalSequences}. 
  According to Corollary~\ref{cor:ActiveSetsShrinkingHorizon}, the active set~\eqref{eq:SequenceOfPersistentPolytopesHelper2} defines the optimal solution and polytope for $x^\star(l)$. 
  According to Proposition~\ref{prop:PersistentPolytopes}, 
  the active set~\eqref{eq:SequenceOfPersistentPolytopesHelper2} defines a polytope that is persistent from horizon $N-l$ on, 
  therefore it is persistent from $N$ on. 
  The claim about~\eqref{eq:SequenceOfPersistentPolytopesHelper3} follows by applying Lemma~\ref{lem:PersistentActiveSets} to~\eqref{eq:SequenceOfPersistentPolytopesHelper2}.  
\end{proof}

It is evident from Figure~\ref{fig:ActiveSetExtension} that the set of persistent polytopes (white and yellow polytopes in the top figure) is in general not convex. 
The following two corollaries summarize some other properties of the set of persistent polytopes 
with active sets~\eqref{eq:PersistentActiveSetsHelper1}. 
\begin{cor}\label{cor:SetOfPersistentPolytopes}
    Let $\mathbb{P}_N$ refer to the union of all persistent polytopes with active sets of the form~\eqref{eq:PersistentActiveSetsHelper1}.
	$\mathbb{P}_N$ is positive invariant under 
	the open-loop optimally controlled system, i.e., $x(k+1)= Ax(k)+ Bu^\star(k)$ with $u^\star(k)$ as in~\eqref{eq:InfiniteOptimalSequences}.  
	Furthermore, $\mathbb{P}_N$ is in general not convex, but its convex hull is a subset of $\F_N$.
\end{cor}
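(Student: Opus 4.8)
The plan is to establish the three assertions — positive invariance, non-convexity, and that $\conv{\mathbb{P}_N}\subseteq\F_N$ — essentially independently, drawing on Lemma~\ref{lem:PersistentActiveSets}, Corollary~\ref{cor:PersistentPolytopesShrinkingHorizon} and Corollary~\ref{cor:ActiveSetsShrinkingHorizon}. For positive invariance, I would take an arbitrary $x(0)\in\mathbb{P}_N$, so $x(0)\in\P(\tilde{\a}\,0\cdots0)$ for some $\tilde{\a}$ of length $N(q_\X+q_\U)$, and consider the open-loop optimally controlled successor $x(1)=x^\star(1)=Ax(0)+Bu^\star(0)$. By Corollary~\ref{cor:ActiveSetsShrinkingHorizon} (case $l=1$), the optimal solution for $x^\star(1)$ and horizon $N-1$ is defined by the active set obtained by deleting the first stage, which still ends in $q_\T$ zeros; then by Corollary~\ref{cor:PersistentPolytopesShrinkingHorizon} (or directly by Lemma~\ref{lem:PersistentActiveSets} with $l=1$) this truncated active set, padded with one inactive stage, is a persistent active set of the form~\eqref{eq:PersistentActiveSetsHelper1} for horizon $N$, and $x^\star(1)$ lies in the polytope it defines. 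Hence $x^\star(1)\in\mathbb{P}_N$, and iterating gives $x^\star(k)\in\mathbb{P}_N$ for all $k\ge0$.

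Non-convexity is asserted only "in general," so it suffices to exhibit one instance; I would point to Example~\ref{ex:ActiveSetExtension} and Figure~\ref{fig:ActiveSetExtension}, where the union of the white and yellow polytopes (precisely the persistent polytopes with active sets of the form~\eqref{eq:PersistentActiveSetsHelper1}) is visibly non-convex, exactly as already noted in the sentence preceding the corollary. No further argument is needed for this clause.

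For the inclusion $\conv{\mathbb{P}_N}\subseteq\F_N$, the key observation is that every persistent polytope with an active set of the form~\eqref{eq:PersistentActiveSetsHelper1} is, by Lemma~\ref{lem:PersistentActiveSets} and Proposition~\ref{prop:PersistentPolytopes}, a subset of $\F_N$ (indeed its closure is), so $\mathbb{P}_N\subseteq\F_N$. Since $\F_N$ is the set of initial states for which the QP~\eqref{eq:parametricQP} is feasible, and its constraint set $\bigG\bigu\le\bigw+\bigE x(0)$ together with $x(k)\in\X$, $x(N)\in\T$ is defined by finitely many affine inequalities in $x(0)$, the set $\F_N$ is a (closed) polyhedron — in fact a polytope under the standing compactness assumptions — and hence convex. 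Convexity of $\F_N$ then immediately yields $\conv{\mathbb{P}_N}\subseteq\conv{\F_N}=\F_N$.

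The main obstacle is the bookkeeping in the positive-invariance argument: I must make sure that truncating "on the left" and re-padding "on the right" genuinely returns an active set of the restricted form~\eqref{eq:PersistentActiveSetsHelper1} for the \emph{same} horizon $N$, and that the successor state actually lands in the corresponding polytope rather than merely in its closure. This is precisely what Corollary~\ref{cor:PersistentPolytopesShrinkingHorizon} (via~\eqref{eq:SequenceOfPersistentPolytopesHelper3} with $l=1$) delivers, so the argument reduces to invoking it correctly; the remaining clauses are either a one-line appeal to a figure or a one-line appeal to convexity of a polyhedron.
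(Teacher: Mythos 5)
Your proposal is correct and follows essentially the same route as the paper: positive invariance via Corollary~\ref{cor:ActiveSetsShrinkingHorizon} (truncating stages on the left along the optimal trajectory) combined with Corollary~\ref{cor:PersistentPolytopesShrinkingHorizon}/Lemma~\ref{lem:PersistentActiveSets} (re-padding to the form~\eqref{eq:PersistentActiveSetsHelper1} for horizon $N$), non-convexity by the example, and $\conv{\mathbb{P}_N}\subseteq\F_N$ from convexity of $\F_N$ — the paper simply does the truncation for general $l$ at once rather than iterating $l=1$. The only cosmetic slip is your justification of convexity of $\F_N$: it is the \emph{projection} onto $x(0)$ of the polyhedron $\{(x(0),\bigu):\bigG\bigu\le\bigw+\bigE x(0)\}$ rather than a set cut out by inequalities in $x(0)$ alone, but the conclusion (convexity, cited from~\cite{Bemporad2002-Automatica} in the paper) stands.
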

\begin{proof}
	Let $x(0)\in\mathbb{P}_N$ be arbitrary and let $\a_N$ be the active set for $x(0)$. 
	Since $\a_N$ has the form~\eqref{eq:PersistentActiveSetsHelper1}, 
	it can be partitioned as in~\eqref{eq:SequenceOfPersistentPolytopesHelper1}. 
	According to Corollary~\ref{cor:ActiveSetsShrinkingHorizon} 
	the active set~\eqref{eq:SequenceOfPersistentPolytopesHelper2} defines the optimal solution and polytope for $x^\star(l)$, 
	where $x^\star(l)$ is as in~\eqref{eq:InfiniteOptimalSequences}. 
	Since $\a_{N-l}$ defines a polytope that is persistent from $N$ on according to Corollary~\ref{cor:PersistentPolytopesShrinkingHorizon}
	we have $x^\star(l)\in\mathbb{P}_N$ for all $l\ge 0$, which proves the first claim. 
	The second claim holds, because $\F_N$ is convex~\cite{Bemporad2002-Automatica} and for every convex set $S$ and every subset $S^\prime\subseteq S$ thereof, 
	the convex hull of $S^\prime$ is contained in $S$. 
\end{proof}
All statements made so far apply to the open-loop optimal input sequences and trajectories. Corollary~\ref{cor:MpcOnPersistentPolytopes} also makes a statement about their use on a receding horizon, i.e., about model predictive control (MPC). 
Let $x\rightarrow u^\text{MPC}(x)$, $u^\text{MPC}:\F_N\rightarrow\R^m$ refer to the feedback law that results from applying the first $m$ elements of $\bigu^\star_N:\F_N\rightarrow\R^{mN}$. 
We stress~\eqref{eq:MpcOnPersistentPolytopes} does not hold on $\F_N$ in general (see remark (iii) in the notation section), but multiple problems~\eqref{eq:OCP} have to be solved to determine the MPC input signal sequence in general. 
\begin{cor}\label{cor:MpcOnPersistentPolytopes}
	Let $\mathbb{P}_N$ be defined as in Corollary~\ref{cor:SetOfPersistentPolytopes}. For every $x(0)\in\mathbb{P}_N$, 
	the open-loop optimal input sequence that solves~\eqref{eq:OCP} is equal to the one that results in MPC, i.e.,
	\begin{equation}\label{eq:MpcOnPersistentPolytopes}
	  u^\star(k)= u^\text{MPC}(x^\star(k)),\, k\ge 0,
	\end{equation}
	for $u^\star(k)$ and $x^\star(k)$ as introduced in~\eqref{eq:InfiniteOptimalSequences}. 
	Furthermore, $\mathbb{P}_N$ is positive invariant under MPC. 
\end{cor}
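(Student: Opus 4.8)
The plan is to deduce this corollary from Corollary~\ref{cor:SetOfPersistentPolytopes} together with Corollary~\ref{cor:ActiveSetsShrinkingHorizon}. The key observation is that for $x(0)\in\mathbb{P}_N$ the whole optimal state trajectory stays inside $\mathbb{P}_N$ (this is exactly the positive-invariance part of Corollary~\ref{cor:SetOfPersistentPolytopes}), and at each such state the optimal solution is horizon-invariant, so the MPC feedback and the open-loop optimal input coincide. I would carry this out in three steps.

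First I would fix $x(0)\in\mathbb{P}_N$, let $\a_N$ be its active set, which by definition has the form~\eqref{eq:PersistentActiveSetsHelper1}, and partition it as in~\eqref{eq:SequenceOfPersistentPolytopesHelper1}. By Corollary~\ref{cor:ActiveSetsShrinkingHorizon}, for every $l\in\{0,\dots,N-1\}$ the tuple $\a_{N-l}=\a_{N,l}\cdots\a_{N,N-1}\underbrace{0\cdots0}_{q_\T}$ is the active set of the optimal solution to~\eqref{eq:OCP} with horizon $N-l$ and initial condition $x^\star(l)$, so in particular the first $m$ components of $\bigu^\star_{N-l}(x^\star(l))$ equal $u^\star(l)$. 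For $l\ge N$, the trajectory has entered $\T$ (indeed $\text{int}\,\T$ by the arguments in the proof of Lemma~\ref{lem:PersistentActiveSets}, cp.~\eqref{eq:PersistentActiveSetsHelper19}) and $u^\star(l)=K_\infty x^\star(l)$, which is the unconstrained optimal and hence also the MPC input there.

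Secondly, I would invoke the persistency of the polytope defined by $\a_{N-l}$: by Corollary~\ref{cor:PersistentPolytopesShrinkingHorizon} it is persistent from horizon $N-l$ on, hence also from horizon $N$ on, so $\bigu^\star_N(x^\star(l))$ has the same first $m$ components as $\bigu^\star_{N-l}(x^\star(l))$, namely $u^\star(l)$. Since $u^\text{MPC}$ is by definition the first $m$ components of $\bigu^\star_N$, this gives $u^\text{MPC}(x^\star(l))=u^\star(l)$ for all $l\ge0$, which is~\eqref{eq:MpcOnPersistentPolytopes}. Finally, positive invariance of $\mathbb{P}_N$ under MPC is immediate: since $u^\text{MPC}(x^\star(k))=u^\star(k)$, the MPC closed-loop trajectory starting at $x(0)$ coincides with $x^\star(k)$, which lies in $\mathbb{P}_N$ for all $k$ by Corollary~\ref{cor:SetOfPersistentPolytopes}.

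The only subtle point, and the place where I would be most careful, is the bookkeeping at the transition $l=N$: Corollary~\ref{cor:ActiveSetsShrinkingHorizon} as stated covers $l\in\{0,\dots,N-1\}$, so the claim that $x^\star(N)\in\text{int}\,\T$ and that $u^\star(N)=K_\infty x^\star(N)$ is MPC-optimal must be taken from the proof of Lemma~\ref{lem:PersistentActiveSets} rather than from the corollary. Beyond that, everything is a direct chaining of already-established facts, so no genuinely new estimate is needed.
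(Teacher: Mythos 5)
Your proposal is correct and follows essentially the same route as the paper's proof: it chains Corollary~\ref{cor:ActiveSetsShrinkingHorizon} with the persistency statements (Corollary~\ref{cor:PersistentPolytopesShrinkingHorizon} / Proposition~\ref{prop:PersistentPolytopes}) to identify the first $m$ components of $\bigu^\star_N(x^\star(l))$ with $u^\star(l)$, treats $l\ge N$ via $x^\star(l)\in\T$ and $u^\text{MPC}(x)=K_\infty x$ there, and obtains positive invariance under MPC from Corollary~\ref{cor:SetOfPersistentPolytopes}. Your remark on the bookkeeping at $l=N$ matches a gap the paper itself acknowledges (its corollary is stated only for $l\le N-1$), and your fix via Lemma~\ref{lem:PersistentActiveSets} is the right one.
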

\begin{proof}
  Let $x(0)\in\mathbb{P}_N$ be arbitrary, 
  let $\a_N$ be the active set for $x(0)$
  and let $u^\star(k)$, $x^\star(k)$, $k\ge 0$ be as in~\eqref{eq:InfiniteOptimalSequences}.
  By the same arguments as in the proof of Corollary~\ref{cor:SetOfPersistentPolytopes}, 
  $\a_N$ has the form~\eqref{eq:SequenceOfPersistentPolytopesHelper1}, 
  $\a_{N-l}$ as in~\eqref{eq:SequenceOfPersistentPolytopesHelper2} defines the optimal solution and polytope for $x^\star(l)$ for all $l\in\{0, \dots, N\}$,
  and this polytope is persistent from $N$ on. 
  This implies $u^\star(l)$ is the first optimal input signal for~\eqref{eq:OCP} with horizon $N$ and initial $x^\star(l)$, 
  which yields $u^\text{MPC}(l)= u^\star(l)$ for all $l\in\{0, \dots, N\}$.
  The claim also holds for all $l> N$, because $u^\text{MPC}(x)= K_\infty x$ for all $x\in\T$ 
  and all open-loop optimal input signals are equal to the unconstrained solution on $\T$, 
  i.e.,  $u^\star(k)= K_\infty x^\star(k)$, $x^\star(k+1)= (A+ BK_\infty)x^\star(k)$ for all $k\ge 0$ and all $x(0)\in\T$. 
  Combining~\eqref{eq:MpcOnPersistentPolytopes} and Corollary~\ref{cor:SetOfPersistentPolytopes} 
obviously yields the positive invariance of $\mathbb{P}_N$ under MPC. 
\end{proof} 
\REMARK{All Corollaries after Proposition~\ref{prop:PersistentPolytopes} make statements about persistent polytopes of the form~\eqref{eq:PersistentActiveSetsHelper1}. Remember this form is sufficient for a polytope to be persistent but not necessary.}

\REMARK{Show weaker property than convexity such as connectedness applies, and show that persistent polytopes form onion layers in $N$.
Note that $\mathcal{P}_N$ is not in general equal to its convex hull, but the convex hull is in $\mathcal{F}_N$.}

The statements made so far focused on active sets. 
Some implied statements on the polytopes 
are collected in Remark~\ref{rem:Polytopes} below.
Two types of relations are interesting in particular: Active sets for \textit{different horizons} that define \textit{geometrically equal} polytopes (the persistent polytopes, see part (ii) of Remark~\ref{rem:Polytopes}). 
Secondly, we are interested in sequences of active sets and polytopes that result for the optimally steered system (part (iii) of Remark~\ref{rem:Polytopes}).
The latter essentially extend statements on state trajectories to \textit{trajectories of polytopes}. 
For clarity, let $x^\star(k; x(0), N)$ refer to $x^\star(k)$ for initial condition $x(0)$ and horizon $N$ in Remark~\ref{rem:Polytopes}, 
where $x^\star(k)$ is as defined in~\eqref{eq:InfiniteOptimalSequences}. 
\begin{rem}\label{rem:Polytopes}
(i) Proposition~\ref{prop:ActiveSetExtension} and Corollary~\ref{cor:ActiveSetsShrinkingHorizon}
relate active sets for \textit{shrinking} horizons $N$, $N-1, \dots$, that result from removing stages at the beginning of the horizon.  
The corresponding polytopes are in general not related geometrically, but dynamically. 
Specifically, applying the optimal input signals maps $\P(\a_N)$ into $\P(\a_{N-1})$, 
$\P(\a_{N-1})$ into $\P(\a_{N-2})$ etc., i.e.,
\begin{align}\label{eq:InclusionProperty}
	\{x^\star(l; x(0), N)|x(0)\in\P(\a_{N-l+1})\}\subseteq\P(\a_{N-l})
\end{align}
for all $l\in\{1, \dots, N-1\}$.
The resulting active sets $\a_{N-1}$, $\a_{N-2}, \dots$ and polytopes $\P(\a_{N-1})$, $\P(\a_{N-2}), \dots$ are not defined for horizon $N$, but for the shrinking horizon.

(ii) Lemma~\ref{lem:PersistentActiveSets} and Proposition~\ref{prop:PersistentPolytopes}
discuss a subset of active sets for \textit{growing} horizons $N$, $N+1, \dots$ that result from inserting inactive stages from the end of the horizon. 
The resulting active sets all define the \textit{geometrically same} polytope.
Specifically, 
\begin{align*}
  \P(\a_{N})= \P(\a_{N+l})
\end{align*}
for all $l\ge 0$, where $\a_N$ and $\a_{N+1}$ refer to the sets in~\eqref{eq:PersistentActiveSetsHelper1} and~\eqref{eq:PersistentActiveSetsHelper2}, respectively. 

(iii) Corollary~\ref{cor:PersistentPolytopesShrinkingHorizon} combines the operations from (i) and (ii).
The resulting sequence of polytopes is the optimal sequence from (i) that obeys the inclusion property~\eqref{eq:InclusionProperty}. 
In contrast to (i), however, the new active sets $\tilde{a}_N$ (see~\eqref{eq:SequenceOfPersistentPolytopesHelper3}) are persistent and thus defined for horizon $N$ under the conditions of Corollary~\ref{cor:PersistentPolytopesShrinkingHorizon}. 
\end{rem}

\section{Some computational aspects}\label{sec:ComputationalAspects}
  A simple criterion for the persistency of polytopes is of obvious interest, since it enables us to detect that the infinite-horizon solution has been found for a polytope by solving the simpler finite-horizon problem. 
  If all polytopes are persistent for some $N$, $\mathbb{P}_N= \F_N$ and the solution to the infinite-horizon problem has been found, since $\F_N= \F_{N+l}$ for all $l\ge 0$. 
  A finite $N$ such that $\mathbb{P}_N= \F_N$ does not always exist (see, e.g., \cite{SchulzeDarup2016}), however.  
  In this case $\mathbb{P}_N$ characterizes the largest region for which the solution to the infinite-horizon problem has been found. $\mathbb{P}_N$ is defined by the active sets~\eqref{eq:PersistentActiveSetsHelper1} in a lean fashion. 
  By exploiting its positive invariance, the number of active sets required  to characterize $\mathbb{P}_N$ can be reduced further (see~\eqref{eq:OutmostPersistentAs} below).
  
  The forward constraint order~\eqref{eq:ForwardConstraintOrder} arises in backward dynamic programming and therefore is an obvious choice. 
  The backward order, i.e., the order that results from reversing the sequence of lines in~\eqref{eq:ForwardConstraintOrder}, may be more useful in computations. 
  For example, the bit tuples~\eqref{eq:ActiveSetExtensionSampleAs} in Example~\ref{ex:ActiveSetExtension} 
  correspond to the index sets $\{1\}$, $\{7\}$, $\{1, 7\}$ and $\{2, 7\}$
  according to rule~\eqref{eq:BitTupels}. 
  While the relation of the active sets is immediately evident from the bit tuples~\eqref{eq:ActiveSetExtensionSampleAs}, this is not the case for the set notation, since the introduction of the additional stage for $N= 2$ shifts all indices by $q_\X+ q_\U= 6$.
  If the backward constraint order is used, 
  the bit tuples and index sets that correspond to~\eqref{eq:ActiveSetExtensionSampleAs} read 
  \begin{equation}\label{eq:BackwardActiveSetExtensionAs}
    \begin{tabular}{ll}
	  $0000.100000$ & $\{5\}$ 
	  \\
	  $0000.100000.000000$ & $\{5\}$ 
	  \\
	  $0000.100000.100000$ & $\{5, 11\}$ 
	  \\
	  $0000.100000.010000$ & $\{5, 12\}$ 
    \end{tabular}
  \end{equation} 
  and their relation is obvious in both notations. 
  The statements of the paper carry over to the backward order in an obvious fashion. 
  For example, the $q_\T$ trailing zeros in all statements in Section~\ref{sec:Persistency} and~\eqref{eq:ActiveSetExtensionSampleAs}
  become $q_\T$ leading zeros in the corresponding statements and~\eqref{eq:BackwardActiveSetExtensionAs} for the backward order.
  Note that the order of constraints within each stage must be fixed but is irrelevant.

  Corollary~\ref{cor:PersistentPolytopesShrinkingHorizon} suggests to determine and store the outmost persistent polytopes 
  and to determine the remaining persistent polytopes with simple bit shifting operations. 
  More precisely, assume all active sets of the form
  \begin{align}\label{eq:OutmostPersistentAs}
    \a_N= \a_{N,0} \cdots \underbrace{\a_{N,N-1}}_{\ne 0\cdots 0}\underbrace{0\cdots 0}_{q_\T}.
  \end{align}
  where the $\a_{N,i}$, $i=0, \dots, N-2$ are arbitrary tuples (possibly $0\cdots 0$) of length $q_\X+ q_\U$.
  According to Corollary~\ref{cor:PersistentPolytopesShrinkingHorizon} the active sets
    \begin{align}\label{eq:PersistentOffspring}
    \a_{N, l} \cdots \a_{N,N-1}
    \underbrace{0\dots0}_{l\cdot(q_\X+ q_\U)}
    \underbrace{0\cdots 0}_{q_\T},
    \quad
    l= 1, \dots, N-1
  \end{align}   
  can be generated for each~\eqref{eq:OutmostPersistentAs} and define persistent polytopes. 
  They define a trajectory of polytopes for the initial polytope~\eqref{eq:OutmostPersistentAs} according to part (iii) of Remark~\ref{rem:Polytopes} that leads to $\T$. 
  Combining $\T$ and all polytopes defined by~\eqref{eq:OutmostPersistentAs} and~\eqref{eq:PersistentOffspring} yields the subset of persistent polytopes $\mathbb{P}_N\subseteq\mathcal{F}_N$. 
  Since the active sets~\eqref{eq:PersistentOffspring} are constructed by simple bit shifting operations, this is a computationally attractive approach to constructing $\mathbb{P}_N$ from the sets~\eqref{eq:OutmostPersistentAs}. 
  The active sets~\eqref{eq:OutmostPersistentAs} can be determined by solving linear programs~\cite{Gupta2011}. 
  
  We claim without giving details that the proposed approach results in a particular depth-first analysis of the active set tree first proposed in~\cite{Gupta2011}. 
  It is an obvious question whether the computational effort of the method proposed in~\cite{Gupta2011} and refined versions thereof~\cite{Feller2013,Herceg2015,Oberdieck2017} can be reduced with the results presented here. The active set tree grows exponentially in $q$, thus it grows exponentially in $N$, $m$ and $n$ in typical cases (for example, $q= 2(N+1)n+ 2Nm$ for bound constraints), and consequently any method for discarding candidate active sets is of great interest. 
  Such an analysis would also have to include a comparison to other well established methods and implementations~\cite{MatlabMpcToolbox,Herceg2013} for solving~\eqref{eq:OCP}. 
  The focus of the present paper is not on computational methods, however, and a comparison is beyond the present paper.  
  
\REMARK{It may also be possible to find the LQR set by analyzing active sets with many zeroes.}

\REMARK{
	\section{Analysis of $\F_N$, $\F_\infty$ etc.}
	We assume the constraint order~\eqref{eq:ForwardConstraintOrder} in this section. All statements can be stated for the order~\eqref{eq:BackwardConstraintOrder} in the obvious way. 
	\begin{lem}
	  Consider the optimal control problem~\eqref{eq:OCPCost} for the horizons $N$ and $N+1$ and let $\a^N_1$, $\a^N_2, \dots$ respectively $\a^{N+1}_1$, $\a^{N+1}_2, \dots$ refer to their active sets. 
	  If all active sets for horizon $N+1$ have the form
	  \begin{equation*}
	    \a^{N+1}_i= 0\cdots 0 \a^{N}_j
	  \end{equation*}
	  then $\F_N$ is the largest possible solutions set for~\eqref{eq:OCP} in the sense that
	  \begin{equation}
	    \F_N= \F_{N+k}\mbox{ for all } k\ge 0.
	  \end{equation}
	\end{lem}
	\textcolor{red}{Show by example that it is not obvious when to stop increasing $N$ in explicit MPC. Try to construct an example such that $\F_N=\X$, but the partition changes from $N$ to $N+1$. Distinguish more carefully between $\F_N= \F_{N+1}$ and 'all polytopes remain the same'.}
	
	We first prove the second part of the above lemma. More precisely, we prove the following statement.
	\begin{lem}
	  Consider~\eqref{eq:OCP} for horizons $N$ and $N+1$. If $\F_N= \F_{N+1}$, then 
	  $\F_{N+k}= \F_N$ for all $k\in\N$. 
	\end{lem}
	
	\begin{proof}
	  We assume $\F_N= \F_{N+1}$ and $\F_{N+2}\backslash\F_{N+1}\ne\emptyset$, and show that a contradiction results.
	  
	  Let $\xi\in \F_{N+2}\backslash\F_{N+1}$ be arbitrary and let $\biguplus\bigu^\star= (u^{\star\top}(0), \dots, u^{\star\top}(N+1))^\top$ refer to the optimal input sequence for $V^\star(\xi, [0, N+2]$. 
	  
	  Since $\bigu^\star$ steers the system from $\xi$ to $\T$ in $N+2$ steps and the constraints are respected, the sequence $(u^{\star\top}(1), \dots, u^{\star\top}(N+1))^\top$ steers the optimal successor state $\xi^+= A\xi+ Bu^\star(0)$ to $\T$ in $N+1$ steps and the constraints are respected.
	  
	  This implies $\xi^+\in\F_{N+1}$. Since $\F_N= \F_{N+1}$ by assumption, $\xi^+\in\F_N$. Let $(\tilde{u}^{\star\top}(0), \dots, \tilde{u}^{\star\top}(N-1)$ refer to the optimal input sequence for $V^\star(\xi^+, [0, N])$. Then $(u^{\star\top}(0), \tilde{u}^{\star\top}(0), \dots, \tilde{u}^{\star\top}(N-1))$ steers $\xi$ to $\T$ in $N+1$ steps and the constraints are respected. This implies $\xi\in\F_{N+1}$, however, which is the desired contradiction. The proof for the cases $\F_N= \F_{N+k}$ for $k>1$ follows by induction. 
	\end{proof}
}

\section{Conclusions and future work}\label{sec:Conclusions}
We uncovered a certain structure of the set of active sets that define the solution to the constrained linear quadratic regulator. 
While the set of active sets and the set of affine pieces and polytopes are equally useful in that both define the solution of the same optimal control problem, 
the structure of the former revealed here is not immediately evident in the latter. 
We therefore claim the structure of the set of active sets is interesting and important per se. This is corroborated by the fact that very simple operations (such as deleting bit tuples that represent a stage, or inserting zeroes for another stage) suffice to generate the active sets that define the persistent part of the geometric solution, i.e., the persistent affine pieces and polytopes. 
More practically, the structure of the set of active sets is useful, for example, 
for an analysis and a lean characterization of those parts of the solution that are independent of the horizon $N$,  
which after all is a nuisance parameter in optimal control problem (where "independence" is understood as in Definition~\ref{def:PersistentActiveSets}). 


Future work will address the extension to nonlinear optimal control problems
and investigate the computational aspects summarized in Section~\ref{sec:ComputationalAspects}.

\section*{Appendix}
\REMARK{
{\bf Feasibility LP with substituted dynamics}
\begin{align}
  \max_{z, x, \lambda_\mathcal{A}, s, t}\quad & t\\
  & Hz+ G^\top_\mathcal{A}\lambda_\mathcal{A}= 0 \\
  & G_\mathcal{A} z- E_\mathcal{A} x- w_\mathcal{A}= 0 \\
  & G_\mathcal{I} z- E_\mathcal{I} x- w_\mathcal{I}+ s= 0 \\
  & t\,1_{|\mathcal{A}|}\le \lambda_\mathcal{A} \\
  & t\,1_{|\mathcal{I}|}\le s\\
  & \lambda_\mathcal{A}\ge 0 \\
  & s\ge 0\in\R^{|\mathcal{I}|} \mbox{ (may be dropped)}\\
  & t\ge 0\in\R
\end{align}
Problem without stationarity conditions:
\begin{align}
  \max_{z, x, s, t}\quad & t\\
  & G_\mathcal{A} z- E_\mathcal{A}x- w_\mathcal{A}= 0 \\
  & G_\mathcal{I} z- E_\mathcal{I}x- w_\mathcal{I}+ s= 0 \\
  & t\,1_{|\mathcal{I}|}\le s\\
  & s\ge 0\in\R^{|\mathcal{I}|} \mbox{ (may be dropped)}\\
  & t\ge 0\in\R
\end{align}

\section*{Feasibility LP without substituted dynamics}
\begin{align}
  \max_{z, \lambda_\mathcal{A}, x, \tau, s, t}\quad&  t \\
  \mbox{s.t. } 
  & Hz+ G_\mathcal{A}^\top \lambda_\mathcal{A}+ M^\top \tau= 0 \\
  & G_\mathcal{A} z- w_\mathcal{A}= 0 \\
  & G_\mathcal{I} z- w_\mathcal{I}+ s= 0 \\
  & Mz- Sx= 0 \\
  & t\, 1_{|\mathcal{A}|} \le \lambda_\mathcal{A} \\
  & t\, 1_{|\mathcal{I}|} \le s \\
  & \lambda_\mathcal{A}\ge 0 \\
  & s\ge 0\in\R^{|\mathcal{I}|} \mbox{ (may be dropped)}\\
  & t\ge 0\in\R
\end{align}
Problem without stationarity conditions:
\begin{align}
  \max_{z, x, s, t}\quad&  t \\
  \mbox{s.t. } 
  & G_\mathcal{A} z- w_\mathcal{A}= 0 \\
  & G_\mathcal{I} z- w_\mathcal{I}+ s= 0 \\
  & Mz- Sx= 0 \\
  & t\, 1_{|\mathcal{I}|} \le s \\
  & s\ge 0\in\R^{|\mathcal{I}|} \mbox{ (may be dropped)}\\
  & t\ge 0\in\R
\end{align}

\REMARK{
{\bf Explicit solution without condensing:}
The dimensions of the matrices that are used in what follows are summarized in table~\ref{tab:DimensionsExplicitWithoutCondensing}.
\begin{equation*}
  \begin{tabular}{llll}
                         & pQP                       & OCP var.\                          & OCP dim.\\\hline
    $x$                  & $n$ \\
    $z$                  & $n_z$                     & $z'= (\bigu^\top, \bigx^\top)$ & $n_z= (m+n)N$ \\
    $w_\mathcal{A}$      & $|\mathcal{A}|$ \\
    $G_\mathcal{A}$      & $|\mathcal{A}|\times n_z$ \\
    $G$                  & $n_\text{ineq}\times n_z$ \\
    $M$                  & $n_\text{eq}\times n_z$   \\
    $S$                  & $n_\text{eq}\times n$     \\
    $\tau$               & $n_\text{eq}$ \\
    $\lambda$            & $n_\text{ineq}$ \\
    $Z$, $Z^{-1}$        & $(|\mathcal{A}|+ n_\text{eq})\times (|\mathcal{A}|+ n_\text{eq})$ 
  \end{tabular}
\end{equation*}
Consider the parametric QP with equality constraints
\begin{equation*}
  \min_z \half  z^\top H z
  \mbox{ s.t. } Gz\le w,\quad Mz= Sx,
\end{equation*}
where $H$ ixs positive definite and $M$ has full row rank. Assume an active set $\mathcal{A}$ with no weakly active constraints is known and let $\mathcal{I}$ be the corresponding inactive set.
The first order conditions for optimimality read
\begin{equation*}
  \begin{split}
    Hz+ G^\top \lambda+ M^\top \tau &= 0
    \\
    Mz&= Sx
    \\
    G_A z&= w_A
    \\
    G_I z&< w_I
    \\
    \lambda_A&> 0
    \\
    \lambda_I&= 0
  \end{split}
\end{equation*}
Since $H$ is invertible by construction, the stationarity conditions can be solved for $z$. This yields
\begin{align}
  z&= -H^{-1}\left(G^\top_\mathcal{A}\lambda_\mathcal{A}+ M^\top \tau\right)
  \label{eq:Solutionz}
  \\
  &= -H^{-1}\begin{bmatrix} G^\top_\mathcal{A}  & M^\top\end{bmatrix}\begin{bmatrix} \lambda_\mathcal{A} \\ \tau\end{bmatrix}
  \nonumber
\end{align}
Substituting this expression into the active inequality constraints and the equality constraints yields
\begin{equation*}
  w_\mathcal{A}= G_\mathcal{A} z= -G_\mathcal{A} H^{-1}G^\top_\mathcal{A}\lambda_\mathcal{A} -G_\mathcal{A} H^{-1} M^\top\tau
\end{equation*}
and
\begin{equation*}
  Sx= Mz= -MH^{-1} G^\top_\mathcal{A}\lambda_\mathcal{A}- H^{-1}M^\top\tau,
\end{equation*}
which can be summarized to yield
\begin{equation*}
  \begin{bmatrix}
    -G_\mathcal{A} H^{-1} G_\mathcal{A} & -G_\mathcal{A} H^{-1} M^\top
    \\
    -M H^{-1} G^\top_\mathcal{A} & -MH^{-1}M^\top
  \end{bmatrix}
  \begin{bmatrix}
    \lambda_\mathcal{A}\\ \tau
  \end{bmatrix}
  = 
  \begin{bmatrix} 
    w_\mathcal{A} \\ Sx   
  \end{bmatrix}
\end{equation*}
or equivalently
\begin{equation*}
  \begin{bmatrix}
    G_\mathcal{A}\\ M
  \end{bmatrix}
  H^{-1}
  \begin{bmatrix}
    G_\mathcal{A}^\top & M^\top
  \end{bmatrix}
  z
  =
  \begin{bmatrix}
    w_\mathcal{A} \\ Sx
  \end{bmatrix}
\end{equation*}

Assuming the block matrix on the l.h.s.\, which we refer to as $Z$ for short, is invertible, we have the first of the following relations
\begin{align}
  \begin{bmatrix}\lambda_\mathcal{A} \\ \tau\end{bmatrix}
  &=
  Z^{-1} \begin{bmatrix} w_\mathcal{A}\\ Sx\end{bmatrix}.
  \label{eq:SolutionMultipliers}
\end{align}
Let $Z^{-1}$ be partitioned, by a slight abuse of notation, into
\begin{equation}\label{eq:PartitionInverseZ}
  Z^{-1}= \begin{bmatrix}
    Z^{-1}_{11} & Z^{-1}_{12}
    \\
    Z^{-1}_{21} & Z^{-1}_{22}
  \end{bmatrix}
\end{equation}
where the first block has dimensions $|\mathcal{A}|\times|\mathcal{A}|$. 
Then~\eqref{eq:SolutionMultipliers} yields
\begin{equation*}
  \lambda_\mathcal{A}= \begin{bmatrix} Z^{-1}_{11} | Z^{-1}_{12}\end{bmatrix} \begin{bmatrix} w_\mathcal{A}\\ Sx\end{bmatrix}> 0
\end{equation*}
or equivalently
\begin{equation*}
  Z^{-1}_{11}w_\mathcal{A}+ Z_{12}^{-1} S x> 0
\end{equation*}

or equivalently
\begin{equation*}
  -Z^{-1}_{12} S x< Z^{-1}_{11} w_\mathcal{A},
\end{equation*}
which constitutes the first set out of two sets of inequalities that define the desired state space polytope. 

Substituting~\eqref{eq:SolutionMultipliers} into~\eqref{eq:Solutionz} yields 
\begin{equation*}
  z=  -H^{-1}\begin{bmatrix} G^\top_\mathcal{A} & M^\top\end{bmatrix}
  Z^{-1} \begin{bmatrix} w_\mathcal{A} \\ Sx\end{bmatrix}
\end{equation*}
and, in particular, 
\begin{equation*}
  \bigu= -H^{-1}_{\{1, \dots, mN\}} \begin{bmatrix} G^\top_\mathcal{A} & M^\top\end{bmatrix}
  Z^{-1} \begin{bmatrix} w_\mathcal{A} \\ Sx\end{bmatrix}
\end{equation*}
(and the remaining rows must magically be equal to $\bigx= \bigA x+ \bigB \bigu$).
Substituting $z(\lambda_\mathcal{A}, \tau)$ from~\eqref{eq:Solutionz} into the inactive inequality constraints $G_\mathcal{I} z< w_\mathcal{I}$ yields
\begin{equation*}
  -G_\mathcal{I}H^{-1}\begin{bmatrix} G^\top_\mathcal{A} & M^\top\end{bmatrix}
  Z^{-1} \begin{bmatrix} w_\mathcal{A} \\ Sx\end{bmatrix}< w_\mathcal{I},
\end{equation*}
or equivalently
\begin{equation*}
  -G_\mathcal{I}H^{-1}\begin{bmatrix} G^\top_\mathcal{A} & M^\top\end{bmatrix}
  \begin{bmatrix} Z^{-1}_{11} & Z^{-1}_{12} \\ Z^{-1}_{21} & Z^{-1}_{22}\end{bmatrix} 
  \begin{bmatrix} w_\mathcal{A} \\S x\end{bmatrix}< w_\mathcal{I},  
\end{equation*}
or equivalently
\begin{equation*}
  -G_\mathcal{I}H^{-1}\begin{bmatrix} G^\top_\mathcal{A} & M^\top\end{bmatrix}
  \begin{bmatrix} Z^{-1}_{11} \\ Z^{-1}_{21}\end{bmatrix} w_\mathcal{A} 
  -G_\mathcal{I}H^{-1}\begin{bmatrix} G^\top_\mathcal{A} & M^\top\end{bmatrix}
  \begin{bmatrix} Z^{-1}_{12} \\ Z^{-1}_{22}\end{bmatrix}S x
  < w_\mathcal{I},  
\end{equation*}
or equivalently
\begin{equation*}
  -G_\mathcal{I}H^{-1}\begin{bmatrix} G^\top_\mathcal{A} & M^\top\end{bmatrix}
  \begin{bmatrix} Z^{-1}_{12} \\ Z^{-1}_{22}\end{bmatrix} Sx
  < w_\mathcal{I}
  +
  G_\mathcal{I}H^{-1}\begin{bmatrix} G^\top_\mathcal{A} & M^\top\end{bmatrix}
  \begin{bmatrix} Z^{-1}_{11} \\ Z^{-1}_{21}\end{bmatrix} w_\mathcal{A}.  
\end{equation*}
}}
\REMARK{Check which special features hold for the minimum time case. Can the size of the power set of the active set candidates be fundamentally reduced in this case?}
\REMARK{Alessio2009 beinhaltet ausfuehrliche Literaturhinweise zu 'reverse transformation', which may be closely related to backward dynamic programming.}

\appendix
Lemma~\ref{lem:PositiveInvarianceAndInterior} is used in the proof of Lemma~\ref{lem:PersistentActiveSets} for $\mathcal{S}=\T$ and $\bar{A}= A+BK_\infty$. 
\begin{lem}\label{lem:PositiveInvarianceAndInterior}
	Let  $\mathcal{S}\subset\R^n$ be a compact set. If $\mathcal{S}$ is positive invariant for the $n$-dimensional system $x(k+1)= \bar{A}x(k)$, then
	the following statements hold:
	\begin{enumerate}
		\item For any $\lambda\in (0, 1)$,  the set $\lambda\mathcal{S}=\{\lambda \xi | \xi\in\mathcal{S}\}$ is positive invariant.  
		\item The interior of $\mathcal{S}$ is positive invariant.
	\end{enumerate}
\end{lem}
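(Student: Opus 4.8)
The plan is to treat the two parts separately; the first is essentially a one-liner and the second is the substantive one.

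For statement~1 I would simply use linearity of $\bar A$: since $\bar A(\lambda\xi)=\lambda\,\bar A\xi$, we get $\bar A(\lambda\mathcal{S})=\lambda\,\bar A\mathcal{S}\subseteq\lambda\mathcal{S}$, the inclusion being exactly the positive invariance of $\mathcal{S}$. Hence $\lambda\mathcal{S}$ is positive invariant; in fact no restriction on $\lambda$ is needed for this, and $\lambda\in(0,1)$ is just the case that gets used later.

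For statement~2 I would prove $\bar A(\operatorname{int}\mathcal{S})\subseteq\operatorname{int}\mathcal{S}$ by a scaling argument. First I would record the standard fact that a convex set containing the origin in its interior can be scaled strictly into its interior, i.e. $\lambda\mathcal{S}\subseteq\operatorname{int}\mathcal{S}$ for every $\lambda\in[0,1)$: for $x\in\mathcal{S}$ and a small ball $B_\delta(0)\subseteq\mathcal{S}$ one has $\lambda x+(1-\lambda)B_\delta(0)=B_{(1-\lambda)\delta}(\lambda x)\subseteq\mathcal{S}$ by convexity. Then, given $x\in\operatorname{int}\mathcal{S}$, openness of $\operatorname{int}\mathcal{S}$ yields $\mu>1$ with $\mu x\in\mathcal{S}$ (trivially also for $x=0$), whence $\mu\,\bar A x=\bar A(\mu x)\in\bar A\mathcal{S}\subseteq\mathcal{S}$, i.e. $\bar A x\in\mu^{-1}\mathcal{S}\subseteq\operatorname{int}\mathcal{S}$ by the scaling fact with $\lambda=\mu^{-1}<1$. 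Since $x$ was arbitrary, statement~2 follows; statement~1 can alternatively be slotted in to reach $\bar A x\in\mu^{-1}\mathcal{S}$, but it is not essential.

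The point to get right — the "main obstacle" — is that statement~2 is false for a bare compact positive invariant set, so the proof must lean on extra structure. For example, $\mathcal{S}=[0,1]^2$ with $\bar A=\operatorname{diag}(1,0)$ is compact and positive invariant ($\bar A\mathcal{S}=[0,1]\times\{0\}\subseteq\mathcal{S}$), yet $\bar A$ maps the interior point $(\tfrac12,\tfrac12)$ to the boundary point $(\tfrac12,0)$. The structure that makes the argument above go through is convexity of $\mathcal{S}$ together with $0\in\operatorname{int}\mathcal{S}$, so I would either add these hypotheses or note that the lemma is only invoked with $\mathcal{S}=\mathcal{T}$, which is convex with $0\in\operatorname{int}\mathcal{T}$ by \cite{Sznaier1987,Gutman1987}. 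A tempting shortcut — taking a ball $B_\epsilon(x)\subseteq\mathcal{S}$ and using $\bar A B_\epsilon(x)\subseteq\mathcal{S}$ to produce a ball around $\bar A x$ — fails precisely when $\bar A$ is singular, which $A+BK_\infty$ may well be; the scaling argument avoids any invertibility assumption.
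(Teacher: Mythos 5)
Your proof is correct and follows essentially the same route as the paper's: part~1 by linearity of $\bar{A}$, and part~2 by scaling an interior point into $\lambda\mathcal{S}$ for some $\lambda\in(0,1)$ and using $\lambda\mathcal{S}\subset\text{int}\,\mathcal{S}$. Your additional observation is well taken: the paper's proof merely asserts that for every $\xi\in\text{int}\,\mathcal{S}$ there exists $\lambda\in(0,1)$ with $\xi\in\lambda\mathcal{S}$ and $\lambda\mathcal{S}\subset\text{int}\,\mathcal{S}$, which is precisely the scaling fact you derive from convexity and $0\in\text{int}\,\mathcal{S}$, and which, as your $\bar{A}=\mathrm{diag}(1,0)$ example shows, can fail for a bare compact positive invariant set; so the extra hypotheses you add (or the restriction to $\mathcal{S}=\T$, which is convex with $0\in\text{int}\,\T$) are indeed needed for statement~2.
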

\REMARK{The proof violates the convention that $x$ should not be used:}
\begin{proof}
Let $\lambda\in(0, 1)$ be arbitrary. Let $\zeta\in\lambda\mathcal{S}$ be arbitrary, then there 
exists an $\xi\in\mathcal{S}$ such that $\zeta= \lambda \xi$ by definition of $\lambda\mathcal{S}$.  
Since $\mathcal{S}$ is positive invariant, $\xi\in\mathcal{S}$ implies $\bar{A}\xi\in\mathcal{S}$, which implies $\lambda\bar{A}\xi\in\lambda\mathcal{S}$. 
Combining this with $\lambda\bar{A}\xi= \bar{A}\lambda \xi= \bar{A}\zeta$ yields $\bar{A}\zeta\in\lambda\mathcal{S}$. 
Since $\lambda\in (0, 1)$ and $\zeta\in\lambda\mathcal{S}$ were arbitrary, the first claim holds.
The second claim follows, since, for any $\xi\in\text{int}\,\mathcal{S}$ there exists a $\lambda\in(0, 1)$ such that $\xi\in\lambda\mathcal{S}$ and $\lambda\mathcal{S}\subset\text{int}\,\mathcal{S}$. 
\end{proof}
\REMARK{
	\begin{lem}
		(i) $\xi\in\T$ implies the constrained infinite-horizon problem 
		and the unconstrained infinite-horizon problem yield the same optimal solution.
		
		(ii) The set $\T$ is the largest such set, i.e., equality of the two problems for some $x\in\R^n$ implies $x\in\T$. 
	\end{lem}
	\TODO{Prove part (ii). State (i) and (ii) as an equivalence.}
	\begin{proof}
	    Let $\xi\in\T$ be arbitrary. 
	    First note that $\xi$ respects the constraints of the constrained problem by definition of $\T$.%
	    \footnote{By definition of $\T$, $\xi\in\T$ implies that the sequences 
	    $
	      x(0)= \xi, \, u(k)= K_\infty x(k), \, x(k+1)= (A+ BK_\infty) x(k),\, k\ge 0
	    $
	    respect the constraints $u(k)\in\U$ and $x(k)\in\X$ for all $k\ge 0$, which are the constraints of the infinite horizon unconstrained problem.}  
	    Since there exists at least one point $\xi$ that respects the constraints, and since every continuous function on a compact set has a minimum, the solution 
	    (denoted $V^\star_\infty(\xi)$, $x^\star(k)$, $u^\star(k)$, $k\ge 0$) to the constrained problem exists. 
	    When the constraints are dropped, the optimal cost function value cannot increase, i.e. $V_{\infty,\text{unc}}^\star(\xi)\le V^\star_\infty(\xi)$, 
	    where 
	    $V^\star_{\infty,\text{unc}}(\xi)$ refers to the optimal value for the solution. This value results for 
	    $u^\star_{\text{unc}}(k)= K_\infty x^\star_{\text{unc}}(k)$, $x^\star_{\text{unc}}(0)= \xi$, 
	    $x^\star_{\text{unc}}(k+1)= Ax^\star_{\text{unc}}x(k)+ Bu^\star_{\text{unc}}(k)$, $k\ge 0$ of the unconstrained problem. 
	    Now $u^\star_{\text{unc}}(k)$, $x^\star_{\text{unc}}(k)$, $k\ge 0$ respects the constraints of the constrained problem, 
	    which can be shown by the same arguments as in the footnote. 
	    Since the cost functions of the constrained and unconstrained problem are equal, 
	    the cost function of the constrained problem evaluates to $V^\star_{\infty,\text{unc}}(\xi)$ for the optimal solution to the unconstrained problem. 
	    Since this value cannot be smaller than the optimal one, we have $V^\star_\infty(\xi)\le V^\star_{\infty,\text{unc}}(\xi)$.
	    Combining this result with $V_{\infty,\text{unc}}^\star(\xi)\le V^\star_\infty(\xi)$ yields 
	    $V_{\infty,\text{unc}}^\star(\xi)= V^\star_\infty(\xi)$.
	\end{proof}
	\begin{lem}
		Let $x(0)\in\F_N$ be arbritrary. 
		If~\eqref{eq:FiniteOptimalInputSequence} is the optimal input sequence for~\eqref{eq:OCP} with horizon $N$,
		then
		\begin{equation*}
		  u^\star(0), \dots, u^\star(N-1), K_\infty x^\star(N+l), l\ge 0,
		\end{equation*}
		where $x^\star(0)= x(0)$, $x^\star(k+1)= Ax^\star(k)+ Bu^\star(k)$, $k\ge 0$
		is the optimal input sequence for the constrained infinite-horizon problem.  
	\end{lem}
	\begin{proof}
	\TODO{To do.}
	\end{proof}
}
\bibliographystyle{plain}
\bibliography{bibliography}
\end{document}